\newtheorem{theorem}{Theorem}
\newtheorem{proposition}{Proposition}
\newtheorem{lemma}{Lemma}
\newtheorem{corollary}{Corollary}
\theoremstyle{definition}
\newtheorem{definition}{Definition}
\begin{document}
\title[Generators for Tensor Product Components]{Generators for Decompositions of Tensor Products of Indecomposable Modules}
\author{Michael~J.~J.~Barry}
\address{15 River Street Unit 205\\
Boston, MA 02108}
\email{mbarry@allegheny.edu}

\subjclass[2010]{20C20}
\keywords{ cyclic group, indecomposable module, module generator, tensor product}

\begin{abstract}
Let $F$ be a field of non-zero characteristic $p$, let $G$ be a cyclic group of order $q =p^a$ for some positive integer $a$, and let $U$ and $W$ be indecomposable $F G$-modules.   We identify a generator for each of the indecomposable components of $U \otimes W$ in terms of a particular $F$-basis of $U \otimes W$, extending previous work in which each component occurred with multiplicity one.   An anonymous reviewer helped us realize that our results are really just a translation of previous results of Iiwa and Iwamatsu to our situation.
\end{abstract}

\maketitle

\section{Introduction}
Let $F$ be a field of non-zero characteristic $p$ and $G$ be a cyclic group of order $q =p^a$ for some positive integer $a$.  It is well-known every indecomposable $F G$-module has $F$-dimension at most $q$, that  there are indecomposable $F G$-modules of every dimension between $1$ and $q$ with any two of the same dimension isomorphic, and that such modules are cyclic and uniserial~\cite[pp. 24--25]{Alperin}.  Let $\{V_1,V_2,\dots,V_q\}$ be a complete set of representatives of the isomorphism classes of indecomposable $F G$-modules with $\dim V_i=i$.
Many authors have investigated the decomposition of $V_m \otimes V_n$ where $m \leq n \leq q$ --- for example, in order of appearance see~\cite{Green1962,Bhama1964, Ralley1969,Renaud1979,McFaul1980,Norman1995,Hou2003,IandI2009,B2011_0}.  From the work of these authors, we know that
$V_m \otimes V_n$ is isomorphic to a direct sum  $V_{\lambda_1} \oplus V_{\lambda_2} \oplus \dots \oplus V_{\lambda_m}$  of $m$ indecomposables where  $\lambda_1\geq \lambda_2 \geq \dots \geq \lambda_m>0$ but that the dimensions $\lambda_i$ depend on the characteristic $p$.

Fix a generator $g$ of $G$. There is a basis $\{u_1,\dots,u_m\}$ of $V_m$ on which the action of $g$ is given by $g u_1=u_1$ and $g u_i=u_{i-1}+u_i$ when $i>1$.  Similarly there is a basis $\{w_1,\dots,w_n\}$ of $V_n$ on which the action of $g$ is given by $g w_1=w_1$ and $g w_i=w_{i-1}+w_i$ when $i>1$.  (We could if we wish take $V_m=\langle w_1,\dots,w_m \rangle$.)  Thus the matrix of $g$ with respect to this basis of $V_n$ is $J_n$, the full $n \times n$ Jordan matrix with eigenvalue $1$.  Clearly $\{u_i \otimes w_j \mid 1 \leq i \leq m,1 \leq j \leq n\}$ is basis for $V_m \otimes V_n$ over $F$.  But $\mathcal{B}=\{u_i \otimes g^{n-i}w_j \mid 1 \leq i \leq m,1 \leq j \leq n\}$ is also a basis for $V_m \otimes V_n$ that is nicer to work with.    For each $i \in [m]=\{1,2,\dots,m\}$, we will identify $y_i$, expressed in terms of $\mathcal{B}$, such that $F Gy_i \cong V_{\lambda_i}$ and $V_m \otimes V_n$ is an internal direct sum of the indecomposable  modules $F G y_i$.  We describe the generators $y_i$ in terms of the dimensions $\lambda_i$ and the inverses of linear transformations $S_{b_i}$ whose matrices with respect to particular bases have binomial entries.

Crucial to our presentation is a result of Iima and Iwamatsu(I\&I)~\cite{IandI2009} on when certain integer-valued determinants are non-zero modulo $p$.   But if we had read~\cite{IandI2009} more carefully, we would have seen that I\&I had established essentially the same results as ours in their parallel but equivalent situation.  They wish to find the Jordan canonical form of $J_{1,m} \otimes J_{1,n}$, where $J_{\alpha,s}$ is the Jordan block with eigenvalue $\alpha$ and size $s$.  They set $R=F[x,y]/(x^m,y^n)$ and $\theta=x+y$.  Their problem is reduced to finding the indecomposable decomposition of $R$ as an $F[\theta]$-module.  They prove that $R=\bigoplus_{i=0}^{m-1}F[\theta]\omega_i$, where $\omega_i$ is an $i$-th degree homogeneous element.  Their $F[\theta]\omega_i$ corresponds to our $V_{\lambda_{i+1}}$,  their $\theta$ to our $g-1$, and their $\omega_i$  to our $y_{i+1}$. 

Though our work is mostly an unwitting translation of previous work,  a recent result allows our identification of $y_{i+1}$ to be much more precise that I\&I's description of $\omega_i$, as we will explain in the next paragraph.  In addition, we believe that there is value in setting out the general solution in our setting since  it  completes and improves on earlier work on special cases ~\cite{Barry2011,B2017} in that setting. 

 In~\cite{IandI2009}, when $\omega_i$ corresponds to a \emph{leading module} of $R$,  it is denoted $\kappa_{(i)}$ and defined as a nonzero element in the  kernel of a certain linear transformation which we will denote as $L_1$.   In this case,  we give $y_{i+1}$ as $L_2(x_{i+1})$,  where $x_{i+1}$ is a specific element of $V_m \oplus V_n$ and $L_2$ is a linear transformation whose action with respect to certain bases is explicitly given.  So we can compute $y_{i+1}$ directly whereas $I\&I$ must solve for the kernel of $L_1$.  I\&I argue that $\kappa_{(i)}$ can be chosen freely because of the uniqueness of the indecomposable decomposition.  But in our set-up we will see that $\kappa_{(i)}$ is unique up to a non-zero multiple  because $\ker L_1$ is one-dimensional. The explicit action of $L_2$ that we give is possible thanks to recent work of Nordenstam and Young~\cite{NordYoung}, which was brought to our attention by the reviewer.

We should point out that Norman, in~\cite{Norman1995,Norman2008}, studied the equivalent problem of the nilpotent linear transformation $S$ on the $m \times n$ matrices $X$ over $F$, where $(X)S=(J_m-I_m)^T X +X (J_n-I_n)$ and gave a recursive specification for a Jordan basis of $S$ in terms of the base-$p$ expansions of $m$ and $n$.  The identification we present here is easier to compute.  

We now describe the organization of the paper.  In  Section~\ref{prelim} we give just enough background to allow us to state and comment on our main theorem in Section~\ref{Statement}.   Then in Section~\ref{AuxResults}, we assemble some auxiliary results needed for the proof of our main theorem given in Section~\ref{Proofs}.  At various points we will comment on how our results align with those of I\&I.  We conclude by computing an example in Section~\ref{Example}.

\section{Preliminaries}\label{prelim}
We record some definitions and results that we will need in order to state our main result.
\begin{definition}\label{def1}
\begin{enumerate}
\item For each $(i,j)$ with $1 \leq i \leq m$ and $1 \leq j \leq n$, define $v_{i,j}=u_i \otimes g^{n-i}w_j$.
\item For every $k \in [m+n-1]$, let $D_k=\langle v_{i,j} \mid i+j=k+1 \rangle$.    Also for convenience define $D_k=\{0\}$ if $k<1$.
\item For each $i \in [m]$, let $x_i=\sum_{j=1}^i (-1)^{j-1}v_{j,i+1-j} \in D_i$.
\end{enumerate}
\end{definition}
\begin{lemma}\label{Prelim}
\begin{enumerate}
\item The set $\mathcal{B}=\{v_{i,j}=u_i \otimes g^{n-i}w_j \mid 1 \leq i \leq m,1 \leq j \leq n\}$ is a basis for $V_m \otimes V_n$ over $F$, and $(g-1)(v_{i,j})=v_{i-1,j}+v_{i,j-1}$ where we understand that $v_{k,\ell}=0$ if $k<1$ or $\ell<1$.
\item  Then $V_m \otimes V_n=\bigoplus_{i=1}^{m+n-1} D_i$ as a direct sum of vector spaces, $(g-1)(D_k) \subseteq D_{k-1}$, and
\[(g-1)^r(v_{i,j})=\sum_{k=0}^r \binom{r}{k} v_{i+k-r,j-k}.\]
\item  The set $\{x_1,x_2,\dots,x_m\}$ is linearly independent over $F$ and $(g-1)x_i=0$ for every $i$.
\end{enumerate}
\end{lemma}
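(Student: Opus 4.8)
The plan is to prove the three parts in order, since each rests on the previous one, and all three amount to bookkeeping with the basis $\mathcal{B}$ under the convention $v_{k,\ell}=0$ when $k<1$ or $\ell<1$. For part~(1): because $g^{n-i}$ is invertible on $V_n$, the set $\{g^{n-i}w_j \mid 1\le j\le n\}$ is an $F$-basis of $V_n$ for each fixed $i$, hence $\{u_i\otimes g^{n-i}w_j \mid 1\le j\le n\}$ is a basis of $\langle u_i\rangle\otimes V_n$; combining these over $i$ in the vector-space decomposition $V_m\otimes V_n=\bigoplus_{i=1}^m\bigl(\langle u_i\rangle\otimes V_n\bigr)$ shows $\mathcal{B}$ is a basis. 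For the action formula I compute $g\,v_{i,j}=gu_i\otimes g^{\,n-i+1}w_j=(u_{i-1}+u_i)\otimes g^{\,n-(i-1)}w_j$ using $gu_i=u_{i-1}+u_i$ (with $u_0=0$); the first summand is $v_{i-1,j}$ by definition, and rewriting $g^{\,n-i+1}w_j=g^{\,n-i}(w_{j-1}+w_j)$ makes the second summand $v_{i,j-1}+v_{i,j}$, so subtracting $v_{i,j}$ yields $(g-1)v_{i,j}=v_{i-1,j}+v_{i,j-1}$, with $i=1$ or $j=1$ absorbed by the zero convention.

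For part~(2), each $v_{i,j}\in\mathcal{B}$ lies in exactly one $D_k$, namely $k=i+j-1$, and as $(i,j)$ ranges over $1\le i\le m$, $1\le j\le n$ the index $k$ ranges over $[m+n-1]$; thus the $D_k$ partition $\mathcal{B}$ and $V_m\otimes V_n=\bigoplus_{k=1}^{m+n-1}D_k$. The inclusion $(g-1)(D_k)\subseteq D_{k-1}$ then follows at once from part~(1), since $i+j=k+1$ forces $v_{i-1,j},v_{i,j-1}\in D_{k-1}$. The power formula I would prove by induction on $r$, the case $r=0$ being trivial: applying $(g-1)$ termwise to $\sum_{k=0}^r\binom{r}{k}v_{i+k-r,j-k}$, splitting the result into two sums, reindexing the second by $k\mapsto k+1$, and combining with Pascal's rule $\binom{r}{k}+\binom{r}{k-1}=\binom{r+1}{k}$ (together with $\binom{r}{0}=\binom{r+1}{0}$ and $\binom{r}{r}=\binom{r+1}{r+1}$ at the two endpoints) gives the $r+1$ case.

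For part~(3): since $x_i\in D_i$ and the $D_i$ form a direct sum, linear independence of $\{x_1,\dots,x_m\}$ reduces to checking $x_i\ne 0$; but the $j=i$ term of $x_i$ is $(-1)^{i-1}v_{i,1}$, a member of $\mathcal{B}$ because $1\le i\le m$, so $x_i\ne 0$. For $(g-1)x_i=0$ I apply part~(1) termwise: $(g-1)x_i=\sum_{j=1}^i(-1)^{j-1}\bigl(v_{j-1,i+1-j}+v_{j,i-j}\bigr)$. In the first sum the $j=1$ term vanishes ($v_{0,i}=0$) and the substitution $j\mapsto j+1$ rewrites it as $\sum_{j=1}^{i-1}(-1)^{j}v_{j,i-j}$; in the second sum the $j=i$ term vanishes ($v_{i,0}=0$), leaving $\sum_{j=1}^{i-1}(-1)^{j-1}v_{j,i-j}$; the two then cancel term by term because $(-1)^j+(-1)^{j-1}=0$.

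I do not anticipate a real obstacle. The only points needing care are tracking the zero convention through the reindexings in parts~(2) and~(3), and the standing hypothesis $m\le n$, which is precisely what guarantees that every subscript $i+1-j$ occurring in $x_i$ lies between $1$ and $n$, so that $x_i$ is a genuine linear combination of elements of $\mathcal{B}$.
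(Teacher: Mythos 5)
Your proof is correct, and it supplies in full the routine verifications that the paper delegates to citations (parts (1) and (3) are referred to Barry 2017 and Norman 1993/1995, and part (2) is dismissed as ``follows from part (1) and the binomial formula''). Your Pascal-rule induction for the power formula, the partition of $\mathcal{B}$ by the diagonals $i+j=k+1$, and the telescoping cancellation for $(g-1)x_i$ are exactly the arguments those sources use, so there is nothing to add.
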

Here our $D_{m+n-i}$, $g-1$, and $v_{i,j}$ correspond to I\&I's $R_{i-1}$, $\theta$, and $x^{m-i}y^{n-j}$, respectively. 
\begin{proof}
(1) See~\cite[Lemma 1]{B2017} or~\cite[Lemma 4]{Norman1993}.

(2) Follows from part (1) and the binomial formula.

(3) See~\cite[Lemma 2]{Norman1995}.
\end{proof}

For each integer $i \in [2,m+n-1]$, denote $(g-1)_{|D_i} \colon D_i \to D_{i-1}$ by $T_i$.
\begin{lemma}\label{InjIsoSur}
The map $T_i$ is injective when $n+1 \leq i \leq m+n-1$,  bijective when $m+1 \leq i \leq n$, and surjective with kernel $\langle x_i \rangle$ when $2 \leq i \leq m$.
\end{lemma}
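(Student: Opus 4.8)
The plan is to analyze $T_i = (g-1)_{|D_i}$ by comparing dimensions of $D_i$ and $D_{i-1}$ and using the explicit formula for $g-1$ from Lemma~\ref{Prelim}. First I would record that $\dim D_k$ equals the number of pairs $(i,j)$ with $1 \le i \le m$, $1 \le j \le n$, $i+j = k+1$; an easy count gives $\dim D_k = k$ for $1 \le k \le m$, $\dim D_k = m$ for $m \le k \le n$, and $\dim D_k = m+n-k$ for $n \le k \le m+n-1$ (using $m \le n$). Consequently, in the range $n+1 \le i \le m+n-1$ we have $\dim D_i < \dim D_{i-1}$, in the range $m+1 \le i \le n$ we have $\dim D_i = \dim D_{i-1}$, and in the range $2 \le i \le m$ we have $\dim D_i > \dim D_{i-1}$ (with $\dim D_i = i$, $\dim D_{i-1} = i-1$). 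So by rank-nullity, once I know $\ker T_i$ in each range, the injectivity/surjectivity/bijectivity claims follow.

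The key point is therefore to compute $\ker T_i$. For $2 \le i \le m$: Lemma~\ref{Prelim}(3) tells us $x_i \in D_i$ and $(g-1)x_i = 0$, so $\langle x_i \rangle \subseteq \ker T_i$; since $\dim D_{i-1} = i-1 = \dim D_i - 1$, the rank of $T_i$ is at most $i-1$, hence $\dim \ker T_i \ge 1$, and to get equality I must show $T_i$ is surjective onto $D_{i-1}$ (equivalently that $\dim \ker T_i = 1$). For the other two ranges I must show $\ker T_i = 0$. The cleanest uniform approach is to prove injectivity of $T_i$ whenever $i \ge m+1$ and, more generally, to understand the kernel directly: an element $z = \sum_{i+j = k+1} c_{i,j} v_{i,j} \in D_k$ satisfies $(g-1)z = \sum_{i+j=k+1} c_{i,j}(v_{i-1,j} + v_{i,j-1})$, and grouping by the index $(i-1,j) = (i, j-1)$ shifted appropriately, the coefficient of $v_{a,b}$ with $a+b = k$ is $c_{a+1,b} + c_{a,b+1}$. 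Thus $z \in \ker T_k$ iff $c_{a+1,b} = -c_{a,b+1}$ for all valid $(a,b)$ with $a+b=k$, i.e. the coefficients along the antidiagonal alternate in sign and are determined by any one of them. This immediately shows $\dim \ker T_k \le 1$ always. Whether the kernel is $0$ or $1$-dimensional then depends on whether the alternating-sign relation can be consistently satisfied given the boundary constraints $1 \le i \le m$, $1 \le j \le n$: the "free" coefficient at one end of the antidiagonal propagates, but if the antidiagonal $i+j = k+1$ has an endpoint forced to be zero by the box constraints on only one side, one gets a contradiction forcing $z=0$.

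Concretely: the antidiagonal $\{(i,j) : i+j = k+1, 1\le i \le m, 1 \le j \le n\}$ has its extreme points at $(i,j) = (\min(k,m), \max(1, k+1-m))$ and $(\max(1,k+1-n), \min(k,n))$. When $k \le m$ (so $k = i$, $2 \le i \le m$) the antidiagonal runs from $(1,k)$ to $(k,1)$, both genuinely inside the box, and the alternating relation is consistent with the one free parameter — this recovers $\ker T_i = \langle x_i \rangle$ since $x_i = \sum_{j=1}^i (-1)^{j-1} v_{j, i+1-j}$ is exactly the alternating vector. When $k \ge n+1$ (so $i = k+1 \ge n+2$... more precisely $n+1 \le i \le m+n-1$, $k=i$), the antidiagonal is "clipped" at both ends by the box, and I expect to show that pushing the alternating relation off the end of the antidiagonal forces the free parameter to be $0$, giving $\ker T_i = 0$. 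For $m+1 \le i \le n$ (middle range), it is again clipped on one side and $\ker T_i = 0$, with bijectivity then following from the dimension equality. The main obstacle will be the bookkeeping in this clipped-antidiagonal argument: carefully tracking which coefficient $c_{i,j}$ is forced to vanish because its "partner" $v_{i-1,j}$ or $v_{i,j-1}$ lies outside the box while the corresponding term in $(g-1)z$ must still cancel. I would handle this by writing the coefficient of $v_{a,b}$ in $(g-1)z$ at the boundary of $D_{k-1}$ and observing it equals a single $c$-coefficient (not a sum), forcing that coefficient to zero, which then cascades along the whole antidiagonal. An alternative, possibly slicker, route for injectivity in the top range is to invoke the known symmetry $V_m \otimes V_n \cong$ (its own "dual" structure) swapping $D_k \leftrightarrow D_{m+n-k}$, reducing the $i > n$ case to the surjectivity already established for $i \le m$; I would mention this but expect the direct antidiagonal computation to be the most self-contained.
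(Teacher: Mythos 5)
Your proposal is correct, but it takes a different route from the paper. The paper's proof is essentially a two-line citation: it invokes Norman's result (\cite[Lemma 2]{Norman1995}, already quoted in Lemma~\ref{Prelim}(3)) that $\{x_1,\dots,x_m\}$ is a basis for the kernel of $g-1$ on \emph{all} of $V_m\otimes V_n$, intersects that kernel with each graded piece $D_i$ to get $\ker T_i=\langle x_i\rangle$ for $i\le m$ and $\ker T_i=0$ for $i>m$, and then (implicitly) finishes with the same dimension count of the $D_k$ and rank--nullity that you make explicit. You instead compute each $\ker T_i$ from scratch: writing $z=\sum c_{i,j}v_{i,j}\in D_k$ and reading off that the coefficient of $v_{a,b}$ in $(g-1)z$ is $c_{a+1,b}+c_{a,b+1}$, you get the alternating-sign recurrence along the antidiagonal, which immediately bounds $\dim\ker T_k\le 1$ and identifies the kernel with $\langle x_k\rangle$ exactly when neither endpoint of the antidiagonal is clipped by the box $[1,m]\times[1,n]$ (i.e.\ $k\le m$), while for $k>m$ the single-term boundary coefficient forces a cascade of zeros. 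This is the right mechanism and the endpoint bookkeeping you describe checks out in all three ranges. What your approach buys is self-containedness: you do not need the external citation, and in fact your computation reproves the relevant graded part of Norman's lemma. What it costs is exactly the boundary bookkeeping you flag; the paper avoids that entirely by outsourcing the global kernel computation. Either argument is acceptable; yours is longer but more elementary.
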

\begin{proof}
By~\cite[Lemma 2]{Norman1995}, $\{x_1,\dots,x_m\}$ is a basis for the kernel of the action of $(g-1)$ on $V_m \otimes V_n$.  The result follows immediately then since $D_i \cap \langle x_1, \dots, x_m \rangle=\langle x_i \rangle$ when $1 \leq i \leq m$ and $D_i \cap \langle x_1, \dots, x_m \rangle=\{0\}$ when $m < i \leq m+n-1$.
\end{proof}

\begin{corollary}
The kernel of the action of $g-1$ on $V_m \otimes V_n$ is $\langle x_1,\dots, x_m \rangle$.
\end{corollary}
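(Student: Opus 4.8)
The plan is to exploit the grading of $V_m \otimes W_n := V_m \otimes V_n$ by the subspaces $D_i$. By Lemma~\ref{Prelim}(2) we have $V_m \otimes V_n = \bigoplus_{i=1}^{m+n-1} D_i$ as vector spaces and $(g-1)(D_i) \subseteq D_{i-1}$, so $g-1$ acts as a homogeneous operator of degree $-1$ with respect to this decomposition. Hence, writing an arbitrary element as $v = \sum_{i=1}^{m+n-1} v_i$ with $v_i \in D_i$, each component $(g-1)(v_i)$ lies in $D_{i-1}$, and since the $D_j$ are independent, $(g-1)(v) = 0$ if and only if $(g-1)(v_i) = 0$ for every $i$. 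Therefore the kernel of $g-1$ on $V_m \otimes V_n$ is the internal direct sum of the kernels of the restrictions of $g-1$ to the individual $D_i$.

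Next I would read off those kernels. For $i = 1$, note $x_1 = v_{1,1}$ and $D_1 = \langle v_{1,1} \rangle$ maps into $D_0 = \{0\}$, so the restriction of $g-1$ to $D_1$ is the zero map with kernel $\langle x_1 \rangle$. For $2 \leq i \leq m+n-1$ the restriction is precisely the map $T_i$ of Lemma~\ref{InjIsoSur}: it has kernel $\langle x_i \rangle$ when $2 \leq i \leq m$, and it is injective (kernel $\{0\}$) when $m < i \leq m+n-1$, since the two ranges $m+1 \leq i \leq n$ and $n+1 \leq i \leq m+n-1$ together cover this interval (using $m \leq n$). Combining these cases, $\ker(g-1) = \bigoplus_{i=1}^{m} \langle x_i \rangle$, which equals $\langle x_1, \dots, x_m \rangle$ because $\{x_1, \dots, x_m\}$ is linearly independent over $F$ by Lemma~\ref{Prelim}(3).

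There is really no obstacle here: the statement is an immediate bookkeeping consequence of Lemma~\ref{InjIsoSur} together with the $D_i$-decomposition. (Indeed, the fact can also be quoted directly from~\cite[Lemma 2]{Norman1995}, which is what was used to prove Lemma~\ref{InjIsoSur} in the first place, so one could regard the corollary as just recording this conclusion in our notation.) The only point requiring a moment's care is to verify that the three index ranges appearing in Lemma~\ref{InjIsoSur}, together with the separate case $i = 1$, genuinely partition $[m+n-1]$, so that no summand $D_i$ is missed when assembling the kernel.
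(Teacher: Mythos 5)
Your argument is correct and matches the paper's (implicit) treatment: the corollary is immediate from Lemma~\ref{InjIsoSur} together with the decomposition $V_m \otimes V_n = \bigoplus_i D_i$, and indeed the paper's proof of Lemma~\ref{InjIsoSur} already quotes exactly this kernel statement from Norman's Lemma 2, as you observe. Your careful check that the index ranges (including the separate case $i=1$) cover all of $[m+n-1]$ is the only substantive verification needed, and you have done it correctly.
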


Next we define a particular ordered basis $\mathcal{B}_i$ for each $D_i$, ($1 \leq i \leq m+n-1$).
\begin{definition}
\[\mathcal{B}_i=
\begin{cases}
(v_{1,i},v_{2,i-1},\dots,v_{i,1}), & \text{$1 \leq i \leq m$,}\\
(v_{1,i},v_{2,i-1},\dots,v_{m,i+1-m}), & \text{$m+1 \leq i \leq n$,}\\
(v_{i+1-n,n},v_{i+2-n,n-1},\dots,v_{m,i+1-m}), & \text{$n+1 \leq i \leq m+n-1$.}
\end{cases}
\]
\end{definition}

\section{Statement of Main Theorem}\label{Statement}
\begin{theorem}\label{MainTheorem}
Suppose that $m$ and $n$ are positive integers with $m \leq n$ and that
\[V_m \otimes V_n \cong V_{\lambda_1} \oplus V_{\lambda_2} \oplus \dots \oplus V_{\lambda_m}
\]
where $\lambda_1 \geq \lambda_2 \geq \dots \geq \lambda_m>0$. For $i\in [m]$, let $a_i+1=\min\{k \mid \lambda_k=\lambda_i\}$ and $b_i=\max\{k \mid \lambda_k=\lambda_i\}$.  Then $\lambda_{k}=m+n-a_i-b_i=\lambda_i$ for every integer $k \in [a_i+1,b_i]$ and $(g-1)^{m+n-2b_i} \colon D_{m+n-b_i} \to D_{b_i}$ is a bijection.  Denote $(g-1)^{m+n-2b_i}_{|D_{m+n-b_i}}$ by $S_{b_i}$.

We consider three cases.
\begin{enumerate}
\item $a_i+1=b_i$: Then $i=b_i$ and $\lambda_i=m+n-2i+1=m+n-2 b_i+1$.  Let $y_{b_i}=S_{b_i}^{-1}(x_{b_i})$.   The action of $S_{b_i}^{-1}$ is given in Proposition~\ref{Adjoint}.
\item $a_i+1<b_i$ and $i=a_i+1$:  Then $(g-1)^{\lambda_i-1} \colon D_{m+n-a_i-1} \to D_{b_i}$ is the composition $S_{b_i} \circ T$ where $T=(g-1)^{b_i-a_i-1}_{|D_{m+n-a_i-1}}$ is an injection.  Then $S_{b_i}^{-1}(x_{b_i}) \in \text{Im }T$.  There is a linear transformation $U \colon D_{m+n-b_i} \to D_{m+n-a_i-1}$, described in Lemma~\ref{TInverse},  such that $U_{|\text{Im }T}$ is the inverse of $T$.  Let $y_{a_i+1}=U(S_{b_i}^{-1}(x_{b_i}))\in D_{m+n-a_i-1}$. 
\item $a_i+1<b_i$ and $a_i+2 \leq i \leq b_i$:  Suppose that the expression for $y_{a_i+1}$ given in Case 2 is $y_{a_i+1}=\sum_{j=m-a_i}^m\alpha_{j} v_{j,m+n-a_i-j}$. Define
\[y_i=(-1)^{i-a_i-1}\sum_{j=m-i+1}^{m+a_i-i+1}\alpha_{j+i-a_i-1}v_{j,m+n+1-i-j} \in D_{m+n-i}.
\]
\end{enumerate}
Then, $(g-1)^{\lambda_i-1}(y_i)=x_{m+n-i-\lambda_i+1}$ and $F G y_i \cong V_{\lambda_i}$ for every $i \in [m]$,  and
\[V_m \otimes V_n =\bigoplus_{i=1}^m FG y_i.\]
\end{theorem}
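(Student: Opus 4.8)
\emph{Proof proposal.} The plan is to reduce the whole statement to the single identity $(g-1)^{\lambda_i-1}(y_i)=x_{m+n-i-\lambda_i+1}$, verify that identity case by case, and then deduce the direct‑sum claim from linear independence of the resulting socle generators together with a dimension count. Suppose the identity holds for a given $i$. Since $x_{m+n-i-\lambda_i+1}$ is nonzero and lies in $\ker(g-1)$ by Lemma~\ref{Prelim}(3), the vectors $y_i,(g-1)y_i,\dots,(g-1)^{\lambda_i-1}y_i$ are $F$-linearly independent while $(g-1)^{\lambda_i}y_i=0$; hence $FGy_i$ is a cyclic $FG$-module of dimension exactly $\lambda_i$ on which $g-1$ acts as a single Jordan block, so $FGy_i\cong V_{\lambda_i}$ --- in particular $FGy_i$ is uniserial with one-dimensional socle $\langle x_{m+n-i-\lambda_i+1}\rangle$. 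So everything comes down to (a) verifying the identity in the three cases and (b) handling the direct sum.

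For (a): in Case~1 we have $\lambda_i-1=m+n-2b_i$ and $y_{b_i}=S_{b_i}^{-1}(x_{b_i})$ with $S_{b_i}=(g-1)^{m+n-2b_i}_{|D_{m+n-b_i}}$ a bijection, so $(g-1)^{\lambda_i-1}(y_{b_i})=S_{b_i}(S_{b_i}^{-1}(x_{b_i}))=x_{b_i}$, and one checks $m+n-i-\lambda_i+1=b_i$; the only nontrivial ingredient is the bijectivity of $S_{b_i}$ (together with the formula for $S_{b_i}^{-1}$ in Proposition~\ref{Adjoint}), which is exactly where the Iima--Iwamatsu determinant criterion enters through the auxiliary results of Section~\ref{AuxResults}. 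In Case~2 the statement factors $(g-1)^{\lambda_i-1}_{|D_{m+n-a_i-1}}=S_{b_i}\circ T$ with $T$ injective (a composite of the injective restrictions of Lemma~\ref{InjIsoSur}, since $a_i\le m-2$ forces $m+n-a_i-1\ge n+1$); the essential point is that $x_{b_i}$ lies in the image of this composite, equivalently $S_{b_i}^{-1}(x_{b_i})\in\operatorname{Im}T$. This I would get from the Iima--Iwamatsu decomposition $R=\bigoplus_j F[\theta]\omega_j$ of $R=F[x,y]/(x^m,y^n)$ with $\omega_j$ homogeneous of degree $j$~\cite{IandI2009}: under the dictionary recorded after Lemma~\ref{Prelim}, $\omega_{a_i}$ lies in $D_{m+n-a_i-1}$, generates a copy of $V_{\lambda_i}$, and $(g-1)^{\lambda_i-1}\omega_{a_i}$ is a nonzero homogeneous element of $\ker(g-1)$ of the same degree as $x_{b_i}$, hence --- as the $x_k$ are homogeneous of distinct degrees --- a scalar multiple of $x_{b_i}$; rescaling $\omega_{a_i}$ puts $x_{b_i}$ into the image. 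Granting this, $U_{|\operatorname{Im}T}=T^{-1}$ (Lemma~\ref{TInverse}) gives $T(y_{a_i+1})=S_{b_i}^{-1}(x_{b_i})$, whence $(g-1)^{\lambda_i-1}(y_{a_i+1})=S_{b_i}(T(y_{a_i+1}))=x_{b_i}=x_{m+n-(a_i+1)-\lambda_i+1}$. In Case~3 I would pass to the commutative ring $R$: the displayed formula for $y_i$ says precisely that the element of $R$ corresponding to $y_i$ equals $(-1)^{i-a_i-1}x^{\,i-a_i-1}$ times the one corresponding to $y_{a_i+1}$, so applying $\theta^{\lambda_i-1}=(g-1)^{\lambda_i-1}$ and using Case~2 yields $(-1)^{i-a_i-1}x^{\,i-a_i-1}$ times (the $R$-image of) $x_{b_i}$, and a short monomial computation --- in which the terms that leave the allowed range of exponents vanish in $R$ --- identifies this with $x_{a_i+b_i+1-i}=x_{m+n-i-\lambda_i+1}$. (If one prefers to stay inside $V_m\otimes V_n$, Case~3 can instead be carried out directly from the binomial formula of Lemma~\ref{Prelim}(2) by index bookkeeping.)

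For (b): write $k_i=m+n-i-\lambda_i+1$ and use the relation $\lambda_i=m+n-a_i-b_i$ (which also yields the ``$\lambda_k=\lambda_i$ on $[a_i+1,b_i]$'' assertion, the level sets of the non-increasing sequence being intervals). Then $k_i=a_i+b_i+1-i$, so on each block $[a_i+1,b_i]$ the map $i\mapsto k_i$ is the order-reversing bijection of that block onto itself; since the blocks partition $[m]$, $i\mapsto k_i$ is a permutation of $[m]$, and therefore $\{x_{k_i}:i\in[m]\}=\{x_1,\dots,x_m\}$, which is linearly independent by Lemma~\ref{Prelim}(3). Now the direct-sum argument is routine: if $\sum_{i=1}^m z_i=0$ with $z_i\in FGy_i$ not all zero, set $N=\max_i\dim FGz_i\ge 1$ and apply $(g-1)^{N-1}$; each surviving summand lies in the simple socle of the uniserial module $FGy_i$, namely $\langle x_{k_i}\rangle$, and at least one is nonzero, contradicting the independence just established. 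Hence $\sum_{i=1}^m FGy_i=\bigoplus_{i=1}^m FGy_i$, and since $\sum_{i=1}^m\dim FGy_i=\sum_{i=1}^m\lambda_i=mn=\dim(V_m\otimes V_n)$, this internal direct sum is all of $V_m\otimes V_n$.

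The main obstacle is the Case~2 input that $S_{b_i}^{-1}(x_{b_i})\in\operatorname{Im}T$ --- equivalently, that the leading copy of $V_{\lambda_i}$ in its block can be generated from within $D_{m+n-a_i-1}$ --- since this is what makes $y_{a_i+1}$ a valid generator rather than merely a well-defined vector; once that is in hand, the remaining effort is the finite but delicate bookkeeping needed to push the explicit descriptions of $S_{b_i}^{-1}$ (Proposition~\ref{Adjoint}) and of $T^{-1}=U_{|\operatorname{Im}T}$ (Lemma~\ref{TInverse}) through the monomial identity of Case~3.
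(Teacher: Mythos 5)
Your proposal is correct and takes essentially the same route as the paper: reduce the theorem to the identity $(g-1)^{\lambda_i-1}(y_i)=x_{m+n+1-i-\lambda_i}$, verify it in the three cases (your multiplication by $x$ in $R$ is exactly the paper's shift map $h_1$ of Lemma~\ref{StoS+1}), and conclude via independence of the socle generators plus a dimension count, as in the proof of Corollary~\ref{CorHou}. The only difference is one of sourcing: for the Case 2 input $x_{b_i}\in\operatorname{Im}(S_{b_i}\circ T)$ and for the relation $\lambda_i=m+n-a_i-b_i$ you invoke Iima--Iwamatsu directly, whereas the paper uses its repurposed Hou lemma (Lemma~\ref{Hou}, Corollary~\ref{CorHou}) and a cited formula for $\lambda_i$ --- sources the paper itself notes are equivalent.
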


\textbf{Comments}

\begin{enumerate}
\item Theorem~\ref{MainTheorem} assumes that the dimensions $\lambda_i$ are known.  This is not a drawback since there are lots of algorithms for computing the $\lambda_i$, for example, see~\cite{IandI2009} or the recent~\cite{Barry2021}.
\item When $i=a_i+1$, which occurs in Cases 1 and 2 of the Theorem, there is a unique $y_i \in D_{m+n-i}$ such that $(g-1)^{\lambda_i-1}(y_i)=x_{m+n-i-\lambda_i+1}$.  This is true because $(g-1)^{\lambda_i-1}$ is injective on $D_{m+n-i}$.  The linear transformation $L_2$ referred to in the Introduction is $U \circ S_{b_i}^{-1}$,  where $U=I$ in case (1). 
\item In Case 3,  where $a_i+1<i \leq b_i$, the equation $(g-1)^{\lambda_i-1}(y)=x_{m+n-i-\lambda_i+1}$ does not have a unique solution for $y \in D_{m+n-i}$, but we use the coefficients of $y_{a_i+1}$ to to single out a particular solution $y_i$.  Another way to express Case 3 is that if $y_{a_i+1}$ has coefficients 
\[(\alpha_{m-a_i},\alpha_{m-a_i+1},\dots,\alpha_{m})\]with respect to the ordered basis
\[\mathcal{B}_{m+n-a_i-1}=(v_{m-a_i,n},v_{m-a_i+1,n-1},\dots,v_{m,n-a_i})\text{ of } D_{m+n-a_i-1},\] 
 then $y_{i}$  has coefficients
 \[(-1)^{i-a_i-1}(\alpha_{m-a_i},\alpha_{m-a_i+1},\dots,\alpha_{m},\underbrace{0,0,\dots,0}_{i-a_i-1})\] 
 with respect to the ordered basis
\[\mathcal{B}_{m+n-i}=(v_{m-i+1,n},v_{m_i+2,n-1},\dots,v_{m,n-i+1})\text{ of }D_{m+n-i}.\]
\item The modules $V_{a_i+1}$ correspond to the \emph{leading modules} of I\&I, while Lemma 2.2.6 and Lemma 2.27 of I\&I correspond to case (1) and case (2), respectively,  of the above Theorem.   I\&I's $\kappa_{(a_i)}$ is in our language a non-zero element of the kernel of $(g-1)^{\lambda_{a_i+1}}$ acting on $D_{m+n-a_i-1}$.  Since $(g-1)^{\lambda_{a_i+1}-1}$ maps $D_{m+n-a_i-1}$ injectively into $D_{m+n-a_i-\lambda_{a_i+1}}$ and the kernel of $g-1$ acting on $D_{m+n-a_i-\lambda_{a_i+1}}$ is $\langle x_{m+n-a_i-\lambda_{a_i+1}} \rangle$,  it follows that the kernel of $(g-1)^{\lambda_{a_i+1}}$ acting on $D_{m+n-a_i-1}$ is $\langle y_{a_i+1} \rangle$, that is, this kernel is one-dimensional.
\end{enumerate}

\section{Auxiliary Results}\label{AuxResults}

\begin{lemma}\label{LTMatrix}
Let $r$ and $s$ be positive integers satisfying $1 \leq r <s \leq m+n-1$.  Let $m_r=\max\{1,r+1-n\}$, $m_s=\max\{1,s+1-n\}$, $M_r=\min\{r,m\}$, and $M_s=\min\{s,m\}$.  Thus $m_r \leq m_s$, $M_r \leq M_s$, 
\[\mathcal{B}_r=(v_{m_r,r+1-m_r},v_{m_r+1,r-m_r},\dots,v_{M_r,r+1-M_r}),\]
and
\[\mathcal{B}_s=(v_{m_s,s+1-m_s},v_{m_s+1,s-m_s},\dots,v_{M_s,s+1-M_s}).\]   Then the $(M_r-m_r+1)\times (M_s-m_s+1)$ matrix of $(g-1)^{s-r} \colon D_s \to D_r$ with respect to $\mathcal{B}_s$ and $\mathcal{B}_r$ is
\[
\begin{pmatrix}
\binom{s-r}{m_s-m_r} & \binom{s-r}{m_s-m_r+1} & \dots & \binom{s-r}{M_s-m_r}\\[4pt]
\binom{s-r}{m_s-m_r-1} & \binom{s-r}{m_s-m_r} & \dots & \binom{s-r}{M_s-m_r-1}\\[4pt]
\vdots & \vdots & \ddots & \vdots \\[4pt]
\binom{s-r}{m_s-M_r} & \binom{s-r}{m_s-M_r+1} & \dots  & \binom{s-r}{M_s-M_r} \\[4pt]
\end{pmatrix},
\] 
that is, the $(i,j)$th entry is $\binom{s-r}{m_s-m_r+j-i}$.
\end{lemma}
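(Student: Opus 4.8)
The plan is to reduce the whole statement to the binomial expansion of $(g-1)^{s-r}$ recorded in Lemma~\ref{Prelim}(2) and then carry out careful bookkeeping with the two orderings. First I would fix a column index $j$ with $1 \le j \le M_s - m_s + 1$, so that the $j$th vector of $\mathcal{B}_s$ is $v_{a,b}$ with $a = m_s + j - 1$ and $b = s + 1 - a = s + 2 - m_s - j$. Here $a+b = s+1$, so $v_{a,b} \in D_s$, and the bounds $m_s \le a \le M_s$ force $1 \le a \le m$ and $1 \le b \le n$ (using $m_s \ge 1$, $M_s \le \min\{s,m\}$, and $m_s \ge s+1-n$ when that quantity exceeds $1$), so $v_{a,b}$ is genuinely a basis vector. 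Applying Lemma~\ref{Prelim}(2) with exponent $s-r$ gives
\[
(g-1)^{s-r}(v_{a,b}) = \sum_{k=0}^{s-r} \binom{s-r}{k} v_{a+k-(s-r),\, b-k},
\]
and every summand lies in $D_r$ since $(a+k-(s-r)) + (b-k) = a+b-(s-r) = r+1$.

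Next I would identify which summand is a scalar multiple of the $i$th vector of $\mathcal{B}_r$, namely $v_{c,d}$ with $c = m_r + i - 1$ and $d = r+1-c = r+2-m_r-i$. Since the first index $a+k-(s-r)$ determines $k$, the only possible contribution to the coefficient of $v_{c,d}$ comes from $k = c-a+(s-r) = (s-r)-(m_s-m_r)+(i-j)$; for that $k$ the second index $b-k$ automatically equals $d$, because both indices sum to $r+1$. Hence the $(i,j)$ entry of the matrix is $\binom{s-r}{k}$ for this $k$, and the symmetry $\binom{N}{K}=\binom{N}{N-K}$ turns $\binom{s-r}{(s-r)-(m_s-m_r)+(i-j)}$ into $\binom{s-r}{(m_s-m_r)+(j-i)}$, which is the claimed formula.

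The step I expect to be fussiest — rather than hard — is matching up the degenerate terms. One must observe once and for all that $D_r$ has $\{v_{i',j'} : i'+j'=r+1,\ m_r \le i' \le M_r\}$ as a basis, so no nonzero summand of the expansion can fail to be accounted for; then check that when $0 \le k \le s-r$ and $i$ lies in the valid row range $[1, M_r-m_r+1]$ the vector $v_{c,d}$ is indeed nonzero (both indices in range, since $m_r \le c \le M_r$ with $M_r \le \min\{r,m\}$), while when the computed $k$ falls outside $[0,s-r]$ the coefficient is $0$, matching the convention $\binom{s-r}{k}=0$ there. With these remarks the matrix is pinned down. Finally I would note that the displayed descriptions of $\mathcal{B}_r$ and $\mathcal{B}_s$ are nothing but the $m \le n$ case of the earlier definition rewritten in terms of $m_r, M_r, m_s, M_s$, so nothing new is needed there; the substantive content of the lemma is entirely Lemma~\ref{Prelim}(2), and the rest is re-indexing.
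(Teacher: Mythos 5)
Your proposal is correct and follows essentially the same route as the paper, whose entire proof is the remark that the result follows from Lemma~\ref{Prelim}(2); your argument just makes the index bookkeeping (the unique $k$ contributing to each entry, the symmetry $\binom{s-r}{k}=\binom{s-r}{s-r-k}$, and the vanishing conventions) explicit.
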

\begin{proof}
The result follows easily from Lemma~\ref{Prelim} (2).
\end{proof}
For an integer $k \in [1,m]$, denote the matrix  of $S_k=(g-1)^{n+m-2k} \colon D_{m+n-k} \to D_k$ with respect to $\mathcal{B}_{m+n-k}$ and $\mathcal{B}_k$ by $A_k$.  When $A_k$ is invertible, we apply a result of Nordenstam and Young to compute $A_k^{-1}$.

\begin{proposition}\label{Adjoint}
Let $M$ be the $k \times k$ matrix with $(i,j)$-entry $\binom{a}{b+j-i} \in F$ and suppose that $M$ is invertible.  Let $d_k=\prod_{i=0}^{k-1} \binom{a+i}{b}/\binom{b+i}{b} \in \mathbb{Q}$, and for each $(i,j)$ with $1 \leq i,j \leq k$, define
\[
z_{i,j}=\frac{d_k}{\binom{a+k-1}{b+i-1}}\sum_{\ell=1}^j(-1)^{\ell+j}\binom{a+k-1}{\ell-1}\binom{a+j-\ell-1}{j-\ell}\prod_{r=1,r \neq i}^k \frac{\ell-b-r}{i-r}
\in \mathbb{Q}.
\]
Then $d_k \in \mathbb{Z}$ and $z_{i,j} \in \mathbb{Z}$ for every $(i,j)$.  Define the $k \times k$ matrix $N$ whose $(i,j)$-entry is $\phi(z_{i,j})$, where $\phi$ is the natural ring homomorphism of $\mathbb{Z}$ into $F$.  Then $\det M=\phi(d_k)$ and $M^{-1}=(\det M)^{-1} N$.
\end{proposition}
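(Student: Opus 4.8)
My plan is to prove the whole statement first over $\mathbb{Z}$ and then push it forward along the reduction map $\phi\colon\mathbb{Z}\to F$. Work over $\mathbb{Q}$ and let $\widehat{M}$ be the $k\times k$ integer matrix whose $(i,j)$-entry is the integer $\binom{a}{b+j-i}$, so that applying $\phi$ entrywise to $\widehat{M}$ gives $M$; similarly let $\widehat{N}=(z_{i,j})_{1\le i,j\le k}$, the rational matrix defined by the displayed formula. The heart of the matter is the pair of identities
\[
\det\widehat{M}=d_k, \qquad \widehat{M}\,\widehat{N}=d_k\,I .
\]
The determinant evaluation is a standard binomial Toeplitz determinant: since $\binom{a}{c}=e_c(1^a)$, the matrix $\widehat{M}$ is a dual Jacobi--Trudi matrix and $\det\widehat{M}$ is the principal specialization of a Schur function of a rectangular shape, which is a product that one checks equals $d_k$; it can equally be obtained by induction on $k$ using Pascal's rule, and it is also recorded in~\cite{NordYoung}. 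The second identity is exactly the inverse formula of Nordenstam and Young~\cite{NordYoung}: once their indexing is matched to ours, their result states precisely that $z_{i,j}/d_k$ is the $(i,j)$-entry of $\widehat{M}^{-1}$. (In the situation where the proposition is applied one has $a=m+n-2k$ and $b=m-k$, cf.\ Lemma~\ref{LTMatrix}, so $a-b=n-k\ge 0$ and every binomial occurring in $d_k$ and in the $z_{i,j}$ is a positive integer; hence all the rational expressions in sight are well defined, and when $d_k=0$ every $z_{i,j}$ vanishes and the second identity is trivial.)

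Granting these two identities, the integrality assertions follow at once. Since $\widehat{M}$ has integer entries, $d_k=\det\widehat{M}\in\mathbb{Z}$. If $d_k\ne 0$, then from $\widehat{M}\widehat{N}=d_k I$ I get $\widehat{N}=d_k\widehat{M}^{-1}=(d_k/\det\widehat{M})\operatorname{adj}\widehat{M}=\operatorname{adj}\widehat{M}$, whose entries are integers; and if $d_k=0$, then each $z_{i,j}=0$ directly from the formula. Either way $d_k\in\mathbb{Z}$ and $z_{i,j}\in\mathbb{Z}$ for all $i,j$, so the matrix $N$ with entries $\phi(z_{i,j})$ is well defined.

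Finally I would apply $\phi$. Because $\det$ is a polynomial with integer coefficients in the matrix entries, $\det M=\phi(\det\widehat{M})=\phi(d_k)$, which is the determinant claim. Applying $\phi$ entrywise to $\widehat{M}\widehat{N}=d_k I$ gives $MN=\phi(d_k)\,I$. By hypothesis $M$ is invertible, so $\det M=\phi(d_k)\ne 0$ in $F$; multiplying $MN=\phi(d_k)I$ on the left by $M^{-1}$ and dividing by $\phi(d_k)$ yields $M^{-1}=\phi(d_k)^{-1}N=(\det M)^{-1}N$, which completes the plan.

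I expect the main obstacle to be the first step: faithfully translating the Nordenstam--Young formula, stated in their combinatorial setting, into the assertion that our rational matrix $\widehat{N}$ equals $d_k\,\widehat{M}^{-1}$ --- that is, carefully matching their conventions so that their closed form becomes the precise alternating-sum-times-Lagrange-factor expression defining $z_{i,j}$. If one instead wanted a self-contained derivation, the real work would be to invert $\widehat{M}$ by hand, for example by factoring it as a product of triangular binomial matrices with explicit inverses, or by an induction on $k$, and then to prove that the resulting closed form agrees with the stated $z_{i,j}$; that last identification is itself a nontrivial binomial-coefficient identity.
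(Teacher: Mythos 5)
Your proposal is correct and takes essentially the same route as the paper: pass to the integer matrix over $\mathbb{Q}$, identify $d_k$ as its determinant (the paper cites Roberts~\cite{Roberts} for this evaluation) and the $z_{i,j}$ as the entries of its adjugate via the substitution $(A,B_j,n)\mapsto(a,b+j,k)$ in Nordenstam--Young~\cite[Theorem 1]{NordYoung}, then apply $\phi$ and use the invertibility of $M$. The only cosmetic differences are your sketched alternative derivations of the determinant identity and the $d_k=0$ case, which cannot occur here since $\det M=\phi(d_k)\neq 0$.
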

\begin{proof}
Let $M_\mathbb{Q}$ be the $k \times k$ matrix with $(i,j)$-entry $\binom{a}{b+j-i} \in \mathbb{Q}$.  Then $\det M_\mathbb{Q} \in \mathbb{Z}$, actually equal to $d_k$ by a result of Roberts~\cite{Roberts}, and $adj(M_\mathbb{Q})$, the adjoint of $M_\mathbb{Q}$ is a matrix of integers, satisfying
$M_\mathbb{Q} adj(M_\mathbb{Q})= d_k I_k=adj(M_\mathbb{Q}) M_\mathbb{Q}$.
Replacing  $(A,B_j,n)$ by $(a,b+j,k)$ in~\cite[Theorem 1]{NordYoung}, we see that $z_{i,j}$ is the $(i,j)$-entry of $adj(M_\mathbb{Q})$.  Now apply $\phi$ to conclude that $M N=\phi(d_k) I_k=(\det M)I_k=N M$ and $M^{-1}=(\det M)^{-1} N$.
\end{proof}

The matrix $A_k$ is just $M$ in Proposition~\ref{Adjoint} with parameters $a=m+n-2k$ and $b=m-k$.

We repurpose a lemma of Hou~\cite[Lemma 2.1]{Hou2003}.

\begin{lemma}\label{Hou}
Consider
\begin{multline*}
D_{m+n-1}\overset{f_1} {\longrightarrow} D_{m+n-2} \overset{f_2} {\longrightarrow} \dots \overset{f_{m-2}}{\longrightarrow} D_{n+1}\overset{f_{m-1}} {\longrightarrow} D_n\\
 \overset{f_m}{\longrightarrow}D_m \overset{h_m}{\longrightarrow}D_{m-1} \overset{h_{m-1}}{\longrightarrow} \dots\overset{h_2}{\longrightarrow} D_1 \overset{h_1}{\longrightarrow}\{0\}
\end{multline*}
where $f_i=T_{m+n-i}$ when $1 \leq i \leq m-1$, $f_m=(g-1)^{n-m}_{|D_n}$, and $h_i=T_i$.  Then the $f_i$ are injective with $f_m$ bijective and the $h_i$ are surjective.  Then there is a permutation $\pi$ of $[m]$ and $z_i \in D_{m+n-i}$ ($1 \leq i \leq m$) such that
\[\langle h_{\pi(i)+1}h_{\pi(i)+2}\dots h_m f_{m} \dots f_i(z_i)\rangle=\text{ker } h_{\pi(i)}=\langle x_{\pi(i)}\rangle\]
where $h_{\pi(i)+1}h_{\pi(i)+2}\dots h_m f_{m} \dots f_i$ is interpreted as $f_m$ if $\pi(m)=m$.
\end{lemma}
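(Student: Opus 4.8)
The plan is to transport both ends of the chain into the $m$-dimensional space $D_m$, where the statement turns into a fact about two nested flags. The injectivity of the $f_i$, the bijectivity of $f_m$, the surjectivity of the $h_i$, and the identifications $\ker h_i=\langle x_i\rangle$ are all immediate from Lemma~\ref{InjIsoSur} (together with $D_1=\langle x_1\rangle$), so I would dispose of those first. For $i\in[m]$ set $\Psi_i=f_m\circ f_{m-1}\circ\cdots\circ f_i\colon D_{m+n-i}\to D_m$, which is $(g-1)^{n-i}$ restricted to $D_{m+n-i}$ and equals $f_m$ when $i=m$; it is injective, so $W_i:=\operatorname{Im}\Psi_i$ has $\dim W_i=\dim D_{m+n-i}=i$, and $\Psi_{i-1}=\Psi_i\circ f_{i-1}$ gives an increasing flag $W_1\subseteq W_2\subseteq\cdots\subseteq W_m=D_m$. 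Dually, for $0\le j\le m$ set $H_j=h_{j+1}\circ h_{j+2}\circ\cdots\circ h_m\colon D_m\to D_j$, which is $(g-1)^{m-j}$ restricted to $D_m$ (with $H_m$ the identity); it is surjective, so $K_j:=\ker H_j$ has $\dim K_j=m-j$, and $H_{j-1}=h_j\circ H_j$ gives a decreasing flag $0=K_m\subseteq K_{m-1}\subseteq\cdots\subseteq K_0=D_m$ in which each $K_j$ has codimension $1$ in $K_{j-1}$.

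Next I would extract the permutation. For fixed $i$ the chain $W_i=W_i\cap K_0\supseteq W_i\cap K_1\supseteq\cdots\supseteq W_i\cap K_m=0$ loses dimension $0$ or $1$ at each step, for a total loss of $i$, so $J_i:=\{\,j\in[m]\colon W_i\cap K_{j-1}\supsetneq W_i\cap K_j\,\}$ has exactly $i$ elements. Any vector witnessing a dimension drop for $W_{i-1}$ at an index $j$ lies in $W_{i-1}\cap K_{j-1}\subseteq W_i\cap K_{j-1}$ but outside $K_j$, hence witnesses a drop for $W_i$ at $j$; so $J_{i-1}\subseteq J_i$, and combined with $|J_i|=i$ this forces $\emptyset=J_0\subsetneq J_1\subsetneq\cdots\subsetneq J_m=[m]$. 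Letting $\pi(i)$ be the unique element of $J_i\setminus J_{i-1}$ defines a permutation $\pi$ of $[m]$ with $\pi(i)\in J_i$ for every $i$.

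Finally I would build the $z_i$. Since $\pi(i)\in J_i$, pick $v_i\in(W_i\cap K_{\pi(i)-1})\setminus K_{\pi(i)}$ and set $z_i=\Psi_i^{-1}(v_i)\in D_{m+n-i}$, legitimate because $\Psi_i$ maps $D_{m+n-i}$ bijectively onto $W_i$. Then $h_{\pi(i)+1}h_{\pi(i)+2}\cdots h_m f_m\cdots f_i(z_i)=H_{\pi(i)}(v_i)$. Since $v_i\in K_{\pi(i)-1}=\ker(h_{\pi(i)}\circ H_{\pi(i)})$, the vector $H_{\pi(i)}(v_i)$ lies in $\ker h_{\pi(i)}=\langle x_{\pi(i)}\rangle$; since $v_i\notin K_{\pi(i)}=\ker H_{\pi(i)}$, it is nonzero; hence $\langle H_{\pi(i)}(v_i)\rangle=\langle x_{\pi(i)}\rangle=\ker h_{\pi(i)}$, which is exactly the assertion. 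When $\pi(m)=m$ the leftmost composite $h_{m+1}\cdots h_m$ is empty and the expression reduces to $f_m$, matching the stated convention.

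I expect the real content to be the reformulation rather than any hard estimate: once the injective half of the chain is recorded as an increasing flag of images in $D_m$ and the surjective half as a decreasing flag of kernels, the nesting $J_{i-1}\subseteq J_i$ yields the matching $\pi$ automatically, with no appeal to Hall's theorem needed. The points that do require attention are the index bookkeeping in the definitions of $\Psi_i$ and $H_j$, the degenerate cases $j\in\{0,m\}$ and $i=m$ in which a composite is empty, and confirming that the argument really is Hou's Lemma~2.1 transcribed into the present setting.
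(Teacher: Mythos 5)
Your proof is correct, and it is essentially the argument the paper outsources: the paper's own proof consists of citing Lemma~\ref{InjIsoSur} for the injectivity/surjectivity claims and then deferring to Hou's Lemma~2.1, whose proof is exactly the flag-intersection argument you reconstruct (images $W_i$ of the injective half, kernels $K_j$ of the surjective half, and the nesting $J_{i-1}\subseteq J_i$ producing $\pi$). The only point worth recording explicitly is the $j=1$ case, where $\ker h_1=D_1=\langle x_1\rangle$ is supplied by $x_1=v_{1,1}$ rather than by Lemma~\ref{InjIsoSur}, which you have in fact noted.
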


\begin{proof}
The injectivity/surjectivity/bijectivity of the maps follows from Lemma~\ref{InjIsoSur}.  Apart from slight changes in notation, the proof follows from that of Hou~\cite[Lemma 2.1]{Hou2003}.
\end{proof}

\begin{corollary}\label{CorHou}
Suppose that
\[V_m \otimes V_n \cong V_{\lambda_1} \oplus \dots \oplus V_{\lambda_m}
\]
where $\lambda_1 \geq \dots \geq \lambda_m>0$. For each $i \in [m]$, there exists $y_i \in D_{m+n-i}$ such that $(g-1)^{\lambda_i-1}(y_i)=x_{m+n-i-\lambda_i+1}$.
\end{corollary}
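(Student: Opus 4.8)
The plan is to extract the $y_i$ directly from Lemma~\ref{Hou}. The first step is to recognize the composite map occurring there as a power of $g-1$: each of $f_i,f_{i+1},\dots,f_{m-1}$ and each of $h_{\pi(i)+1},\dots,h_m$ is a single restriction of $g-1$, while $f_m=(g-1)^{n-m}_{|D_n}$, so carrying $z_i\in D_{m+n-i}$ down to $D_{\pi(i)}$ applies $g-1$ exactly $(m-i)+(n-m)+(m-\pi(i))=m+n-i-\pi(i)$ times. (The convention in Lemma~\ref{Hou} for the case $\pi(i)=m$ is consistent with this, the $h$-part then being empty and the exponent $n-i$.) Hence Lemma~\ref{Hou} reads $(g-1)^{m+n-i-\pi(i)}(z_i)=c_i\,x_{\pi(i)}$ with $c_i\in F$, $c_i\ne 0$.

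Second, write $\mu_i:=m+n-i-\pi(i)+1$. Since $(g-1)x_{\pi(i)}=0$ by Lemma~\ref{Prelim}(3), we have $(g-1)^{\mu_i}(z_i)=0$ while $(g-1)^{\mu_i-1}(z_i)=c_i x_{\pi(i)}\ne 0$. The cyclic module $FGz_i$ is a module over $FG\cong F[g-1]\cong F[t]/(t^{q})$, hence equals $FG/\mathrm{Ann}(z_i)$ with $\mathrm{Ann}(z_i)=(t^{\mu_i})$ by the previous sentence; thus $FGz_i\cong V_{\mu_i}$ and its socle is $\langle x_{\pi(i)}\rangle$. Putting $y_i:=c_i^{-1}z_i\in D_{m+n-i}$ gives $(g-1)^{\mu_i-1}(y_i)=x_{\pi(i)}$.

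Third, one must see $\mu_i=\lambda_i$, which also forces $\pi(i)=m+n-i-\lambda_i+1\in[m]$, so that $x_{m+n-i-\lambda_i+1}$ is genuinely one of $x_1,\dots,x_m$ and the corollary follows. Lemma~\ref{Hou} is a restatement of Hou's Lemma~2.1 \cite{Hou2003}, which additionally records that $V_m\otimes V_n=\bigoplus_{i=1}^m FGz_i$ (this is consistent with a dimension count, $\sum_i\mu_i=m(m+n+1)-2\sum_{i=1}^m i=mn$, and with the fact that, $\pi$ being a permutation, the socle elements $c_i x_{\pi(i)}$ exhaust $\langle x_1,\dots,x_m\rangle$). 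Therefore $\{\mu_1,\dots,\mu_m\}=\{\lambda_1,\dots,\lambda_m\}$ as multisets, and it remains only to check that the $\mu_i$ appear in non-increasing order, i.e. $\mu_1\ge\mu_2\ge\cdots\ge\mu_m$, equivalently $\pi(i+1)\ge\pi(i)-1$ for all $i$.

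I expect this last monotonicity to be the only real obstacle. It says that Hou's construction, which processes the chain of Lemma~\ref{Hou} from the top $D_{m+n-1}$ downward, yields the indecomposable summands in non-increasing order of dimension as the index $i$ grows --- a generator introduced at level $D_{m+n-i}$ cannot produce a smaller cyclic module than one introduced at the lower level $D_{m+n-i-1}$. This is implicit in the proof of Hou's Lemma~2.1 and is the counterpart of Iima--Iwamatsu's ordering of their homogeneous generators $\omega_i$ by degree \cite{IandI2009}; granting it, $\mu_i=\lambda_i$ for every $i$, and $y_i=c_i^{-1}z_i$ has $(g-1)^{\lambda_i-1}(y_i)=x_{\pi(i)}=x_{m+n-i-\lambda_i+1}$, as required.
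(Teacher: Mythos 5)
Your argument is correct and is essentially the paper's own proof: extract the $z_i$ and the permutation $\pi$ from Lemma~\ref{Hou}, note $(g-1)^{m+n-i-\pi(i)}(z_i)$ spans $\langle x_{\pi(i)}\rangle$ so that after rescaling $FGy_i\cong V_{m+n-i-\pi(i)+1}$ with socle $\langle x_{\pi(i)}\rangle$, get the direct sum from linear independence of the socles plus the dimension count $\sum_i(m+n+1-i-\pi(i))=mn$, and then identify $\mu_i=\lambda_i$ via Krull--Schmidt once $m+n+1-i-\pi(i)$ is known to be non-increasing. The monotonicity you flag as the remaining obstacle is handled in the paper exactly as you suggest, by citation rather than proof (the comment after Theorem~2.2 of~\cite{Hou2003}, which references Corollary~5 of~\cite{Norman1995}), so your proposal is at the same level of completeness as the paper's argument.
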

\begin{proof}
By Lemma~\ref{Hou}, there exists a permutation $\pi$ of $[m]$ and $y_i \in D_{m+n-i}$, $1 \leq i \leq m$, such that $(g-1)^{m+n-i-\pi(i)}(y_i)= x_{\pi(i)}$.  Then $F G y_i$ is an indecomposable submodule of $V_m \otimes V_n$ of dimension $m+n-i-\pi(i)+1$ over $F$, hence isomorphic to $V_{m+n-i-\pi(i)+1}$, with one-dimensional socle  $\langle x_{\pi(i)} \rangle$. Also $\sum_{i=1}^m F G y_i$ is a direct sum since the socles are linearly independent by Lemma~\ref{Prelim}. Lastly, since $\sum_{i=1}^m \dim_F F G y_i=\sum_{i=1}^m (m+n-i-\pi(i)+1)=m n$,
\[V_m \otimes V_n = \bigoplus_{i=1}^m F G y_i.\]
Now $m+n+1-i-\pi(i)$ is non-increasing by the comment after~\cite[Theorem 2.2]{Hou2003} which references~\cite[Corollary 5]{Norman1995}.  Therefore by Krull-Schmidt, $\lambda_i=m+n+1-i-\pi(i)$.  Hence for each integer $i$ in $[1,m]$,
\[(g-1)^{\lambda_i-1}(y_i)=x_{\pi(i)}=x_{m+n+1-i-\lambda_i}.\]
\end{proof}
Corollary~\ref{CorHou} corresponds to Theorem 2.2.2 of I\&I.

Example 1: When the characteristic of $F$ is $5$, $V_6 \otimes V_9 \cong V_{14} \oplus V_{10}\oplus V_{10} \oplus V_{10} \oplus V_6 \oplus V_4$.  By Corollary~\ref{CorHou}, for each integer $i$ in $[1,6]$, there is a $y_i \in D_{15-i}$ such $(g-1)^{13}(y_1)=x_1$, $(g-1)^9(y_2)=x_4$, $(g-1)^9(y_3)=x_3$, $(g-1)^9(y_4)=x_2$, $(g-1)^5(y_5)=x_5$, and $(g-1)^3(y_6)=x_6$.

\begin{lemma}\label{BinomSum}
Let $s$, $i$, and $j$ be positive integers with $i \leq j$.  Then
\[
\sum_{k=i}^j\binom{s}{k-i}(-1)^{j-k}\binom{s-1+j-k}{s-1}=\delta_{i,j}.
\]
\end{lemma}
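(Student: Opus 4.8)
The plan is to recognize the sum as a matrix-product identity, namely the statement that two specific lower-triangular (in the appropriate index range) integer matrices are mutually inverse. Fix $s$. Consider the infinite lower-triangular matrix $P$ with $(a,b)$-entry $\binom{s}{a-b}$ (for $a\geq b$, zero otherwise) and the matrix $Q$ with $(b,c)$-entry $(-1)^{b-c}\binom{s-1+b-c}{s-1}$ (again lower-triangular). The asserted identity is exactly the $(i,j)$-entry of the product $PQ$: summing $\binom{s}{k-i}$ against $(-1)^{j-k}\binom{s-1+j-k}{s-1}$ over $k$ from $i$ to $j$ gives $\sum_{b}P_{i,b}Q_{b,j}$ after reindexing $b=k$, and this should equal $\delta_{i,j}=I_{i,j}$. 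So the whole lemma reduces to: $Q=P^{-1}$.

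The cleanest way I would carry this out is via generating functions. The matrix $P$ is the lower-triangular Toeplitz matrix whose symbol is $(1+t)^s$, in the sense that the $(a,b)$-entry is $[t^{a-b}](1+t)^s$. Likewise $Q$ is the lower-triangular Toeplitz matrix whose symbol is $\sum_{r\geq 0}(-1)^r\binom{s-1+r}{s-1}t^r=(1+t)^{-s}$, using the standard negative-binomial expansion $(1+t)^{-s}=\sum_{r\geq0}\binom{-s}{r}t^r=\sum_{r\geq0}(-1)^r\binom{s-1+r}{r}t^r$. Since multiplication of lower-triangular Toeplitz matrices corresponds to multiplication of their symbols, the product $PQ$ has symbol $(1+t)^s\cdot(1+t)^{-s}=1$, i.e. $PQ=I$, which is precisely the claimed identity. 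One just needs to observe that for the specific entry $(i,j)$ the sum is finite and the symbol manipulation is legitimate (only finitely many terms contribute, so no convergence issue arises).

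An alternative, if one prefers to stay purely elementary: induct on $j-i$. The base case $j=i$ is the single term $\binom{s}{0}\binom{s-1}{s-1}=1$. For the inductive step, split off the $k=j$ term and use the Pascal-type recurrence $\binom{s-1+j-k}{s-1}$ relations together with the Vandermonde/Chu identity to telescope the remaining sum to zero; this amounts to re-proving $(1+t)^s(1+t)^{-s}=1$ coefficient-by-coefficient, so it is the same computation in disguise. I would go with the generating-function argument since it is shortest and most transparent.

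The main obstacle is essentially bookkeeping: getting the ranges of summation and the index shifts exactly right so that the finite sum in the lemma genuinely matches the convolution coefficient of the two power series, and confirming the negative-binomial sign convention ($(1+t)^{-s}=\sum_r(-1)^r\binom{s-1+r}{r}t^r$) is applied correctly. There is no deep difficulty here — the identity is a disguised form of $(1+t)^s(1+t)^{-s}=1$ — but care is needed because the indices $i,j$ are restricted to positive integers and one must check that extending to all nonnegative indices (where the Toeplitz/symbol language lives) does not change anything.
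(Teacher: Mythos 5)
Your proposal is correct, and the generating-function argument is sound: after the substitution $r=k-i$, $t=j-k$ the sum is exactly the coefficient of $x^{j-i}$ in $(1+x)^s(1+x)^{-s}=1$, using $(1+x)^{-s}=\sum_{t\ge 0}(-1)^t\binom{s-1+t}{s-1}x^t$, and since $j-i\ge 0$ this coefficient is $\delta_{i,j}$. The paper takes a different route: it quotes formula (5.25) of Graham--Knuth--Patashnik, $\sum_{k\le \ell}\binom{\ell-k}{r}\binom{s}{k-t}(-1)^k=(-1)^{\ell+r}\binom{s-r-1}{\ell-r-t}$, specializes the parameters, and then checks that the support of the summand is exactly $[i,j]$ so that the infinite sum collapses to the finite one in the lemma. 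Your approach is more self-contained (it needs only the negative-binomial expansion rather than an external reference) and, as you note, makes transparent that the lemma is just the Toeplitz-matrix form of $P^{-1}=Q$, which is precisely how it gets used in Lemma~\ref{InverseMatrices}; the paper's approach buys brevity at the cost of importing a citation and doing a small support-of-summand check. Either is acceptable; just make sure, if you write up the convolution version, to state explicitly that the full convolution $\sum_{r+t=j-i}$ coincides with the sum over $k\in[i,j]$ because both factors vanish outside that range.
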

\begin{proof}
Formula (5.25) of~\cite{GKP} is
\[\sum_{k \leq \ell} \binom{\ell-k}{r}\binom{s}{k-t}(-1)^k=(-1)^{\ell+r} \binom{s-r-1}{\ell-r-t},
\]
where $\ell$, $r$, and $t$ are nonnegative integers.  Taking $\ell=m-1+j$, $r=m-1$, $s=m$, and $t=i$ gives
\begin{align*}
\sum_{k \leq m-1+j}\binom{m-1+j-k}{m-1} &\binom{m}{k-i}(-1)^{k}\\&=(-1)^{2 m-2+j}\binom{m-(m-1)-1}{m-1+j-(m-1)-i}\\
&=(-1)^j \binom{0}{j-i}\\
&=(-1)^j \delta_{i,j}.
\end{align*}
Now $\{k \mid k \leq m-1+j, \binom{m-1+j-k}{m-1} \binom{m}{k-i}\neq 0\}=[i,j]$.  Therefore
\begin{align*}
\sum_{k=i}^j\binom{m}{k-i}(-1)^{j-k}\binom{m-1+j-k}{m-1}&=(-1)^j\sum_{k=i}^j\binom{m}{k-i}(-1)^{k}\binom{m-1+j-k}{m-1}\\
&=\delta_{i,j}.
\end{align*}
\end{proof}

\begin{lemma}\label{InverseMatrices}
Let $r$ and $s$ be positive integers.  Define the $r \times r$ matrices $A(r,s)$ with $(i,j)$th entry $a_{i,j}=\binom{s}{j-i}$ and $B(r,s)$ with $(i,j)$th entry $b_{i,j}=(-1)^{j-i}\binom{s-1+j-i}{s-1}$.  Then $A(r,s)$ and $B(r,s)$ are upper-triangular unipotent matrices and \\$A(r,s)^{-1}=B(r,s)$.
\end{lemma}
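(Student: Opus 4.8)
The plan is to verify the two structural claims directly and then reduce the inverse identity to Lemma~\ref{BinomSum}. First I would check that $A(r,s)$ and $B(r,s)$ are upper-triangular unipotent. For $A(r,s)$ this is immediate: $a_{i,j}=\binom{s}{j-i}$ vanishes when $j<i$ because then $j-i<0$, and $a_{i,i}=\binom{s}{0}=1$. For $B(r,s)$, when $j<i$ the upper index $s-1+j-i$ of $\binom{s-1+j-i}{s-1}$ is strictly less than the lower index $s-1$, so $b_{i,j}=0$, while $b_{i,i}=(-1)^{0}\binom{s-1}{s-1}=1$. In particular both matrices are invertible.

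Next I would compute the $(i,j)$-entry of the product $A(r,s)B(r,s)$, which is $\sum_{k=1}^{r}a_{i,k}b_{k,j}=\sum_{k=1}^{r}\binom{s}{k-i}(-1)^{j-k}\binom{s-1+j-k}{s-1}$. By the triangularity just established, $a_{i,k}=0$ unless $k\ge i$ and $b_{k,j}=0$ unless $k\le j$, so only the terms with $i\le k\le j$ survive. If $i>j$ this index set is empty and the entry is $0=\delta_{i,j}$; if $i\le j$ the entry equals $\sum_{k=i}^{j}\binom{s}{k-i}(-1)^{j-k}\binom{s-1+j-k}{s-1}$, which is precisely the sum evaluated in Lemma~\ref{BinomSum}, hence equals $\delta_{i,j}$. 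Therefore $A(r,s)B(r,s)=I_r$, and since these are square matrices of the same size, $A(r,s)^{-1}=B(r,s)$.

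Everything here is routine once Lemma~\ref{BinomSum} is available; the only point requiring a little care is recognizing that the unrestricted sum defining the matrix product collapses exactly onto the range $[i,j]$ that appears in Lemma~\ref{BinomSum}. This is what the upper-triangular unipotent shape of the two factors buys us, together with the convention that a binomial coefficient vanishes outside Pascal's triangle so that the would-be terms with $k<i$ or $k>j$ are genuinely zero. I do not anticipate any deeper obstacle.
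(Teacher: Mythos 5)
Your proof is correct and follows essentially the same route as the paper's: establish that both matrices are upper-triangular unipotent, then reduce the $(i,j)$-entry of the product $A(r,s)B(r,s)$ for $i\le j$ to the sum evaluated in Lemma~\ref{BinomSum}. The only difference is that you spell out the collapse of the summation range and the $i>j$ case explicitly, which the paper leaves implicit.
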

\begin{proof}
This is a special case of~\cite[Theorem 1]{NordYoung}.
The product $A(r,s)B(r,s)$ is upper-triangular unipotent.
When $j \geq i$, the $(i,j)$th entry of $A(r,s)B(r,s)$ is
\[\sum_{k=i}^j\binom{s}{k-i}(-1)^{j-k}\binom{s-1+j-k}{s-1},
\]
which equals $\delta_{i,j}$ by Lemma~\ref{BinomSum}.  Thus $A(r,s)^{-1}=B(r,s)$.
\end{proof}

\begin{lemma}\label{TInverse}
Suppose that $a$ and $b$ are positive integers with $a+1<b \leq m$.  Recall that  $\mathcal{B}_{m+n-a-1}=(v_{m-a,n},v_{m-a+1,n-1},\dots,v_{m,n-a})$ and \\$\mathcal{B}_{m+n-b}=(v_{m-b+1,n},v_{m-b+2,n-1}, \dots, v_{m,n-b+1})$.  Denote by $T$ the linear transformation $(g-1)^{b-a-1} \colon D_{m+n-a-1} \to D_{m+n-b}$.  Define the linear transformation $U \colon D_{m+n-b} \to D_{m+n-a-1}$ by
\begin{align*}
U&\left(\sum_{j=m-b+1}^m \alpha_{j} v_{j,m+n-b+1-j} \right)\\
&=\sum_{k=m-a}^m \left(\sum_{i=k}^m (-1)^{i-k}\binom{i-k+b-a-2}{b-a-2} \alpha_{i}\right)v_{k,m+n-a-k}.
\end{align*}
(So $U(v_{m-b+1,n})=\dots=U(v_{m-a-1,n-b+a+2})=0$.)
Then $T \circ U_{|\text{Im }T}=I_{\text{Im }T}$ and $U \circ T=I_{D_{m+n-a-1}}$.
\end{lemma}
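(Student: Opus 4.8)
The plan is to reduce both claims to matrix identities with respect to the displayed ordered bases and then to quote Lemma~\ref{InverseMatrices}. Write $c=b-a-1\ge 1$; note $\dim D_{m+n-a-1}=a+1$, $\dim D_{m+n-b}=b$, and that $T$ is injective, being a composite $T_{m+n-b+1}\circ\cdots\circ T_{m+n-a-1}$ of maps injective by Lemma~\ref{InjIsoSur} (all indices lie in $[n+1,m+n-1]$). First I would record the matrix $C$ of $T=(g-1)^{c}\colon D_{m+n-a-1}\to D_{m+n-b}$ with respect to $\mathcal{B}_{m+n-a-1}$ and $\mathcal{B}_{m+n-b}$: Lemma~\ref{LTMatrix} with $r=m+n-b$ and $s=m+n-a-1$ (so $s-r=c$ and, after evaluating the $m_r,m_s$ there, $m_s-m_r=c$) says $C$ is the $b\times(a+1)$ matrix with $(i,j)$ entry $\binom{c}{c+j-i}=\binom{c}{i-j}$. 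The structural point is that if $C=\begin{pmatrix}C_{\mathrm{top}}\\ C_{\mathrm{bot}}\end{pmatrix}$ is split into its first $c$ rows and its last $a+1$ rows, then reindexing the bottom rows by $q=i-c\in[a+1]$ yields $(C_{\mathrm{bot}})_{q,j}=\binom{c}{(c+q)-j}=\binom{c}{j-q}$, which is exactly the matrix $A(a+1,c)$ of Lemma~\ref{InverseMatrices}: upper unitriangular, hence invertible.

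Next I would write out the matrix $U'$ of $U\colon D_{m+n-b}\to D_{m+n-a-1}$ with respect to $\mathcal{B}_{m+n-b}$ and $\mathcal{B}_{m+n-a-1}$ straight from the defining formula, translating the $v$-subscripts into basis positions $p\in[b]$ (source) and $q\in[a+1]$ (target). The parenthetical remark $U(v_{m-b+1,n})=\dots=U(v_{m-a-1,n-b+a+2})=0$ says precisely that the first $c$ columns of $U'$ vanish, while for $p\ge q+c$ the $(q,p)$ entry of $U'$ is $(-1)^{p-q-c}\binom{p-q-1}{c-1}$. Writing $p=c+p'$ for the surviving columns gives $U'=(0\mid V)$, where $0$ is the $(a+1)\times c$ zero block and $V$ is the $(a+1)\times(a+1)$ matrix with $(q,p')$ entry $(-1)^{p'-q}\binom{(c-1)+(p'-q)}{c-1}$ for $p'\ge q$ and $0$ otherwise; that is, $V=B(a+1,c)$ in the notation of Lemma~\ref{InverseMatrices}.

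With these in hand the two assertions follow at once. Block multiplication gives $U'C=(0\mid V)\begin{pmatrix}C_{\mathrm{top}}\\ C_{\mathrm{bot}}\end{pmatrix}=V\,C_{\mathrm{bot}}=B(a+1,c)\,A(a+1,c)=I_{a+1}$ by Lemma~\ref{InverseMatrices}, which is exactly the statement $U\circ T=I_{D_{m+n-a-1}}$. For the remaining identity, injectivity of $T$ lets us write any $v\in\text{Im }T$ as $v=T(w)$, whence $(T\circ U)(v)=T\bigl(U(T(w))\bigr)=T\bigl((U\circ T)(w)\bigr)=T(w)=v$, so $T\circ U_{|\text{Im }T}=I_{\text{Im }T}$.

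I expect the only real obstacle to be the index bookkeeping in the middle two paragraphs: one must check that the ordered bases are arranged so that the \emph{first} $a+1$ basis vectors of $D_{m+n-a-1}$ are carried by $T$ into combinations of the \emph{last} $a+1$ basis vectors of $D_{m+n-b}$, which is what forces $C_{\mathrm{bot}}$ and $V$ to equal $A(a+1,c)$ and $B(a+1,c)$ on the nose rather than up to a shift or a reversal of rows and columns. Once the blocks are matched, the underlying binomial identity is entirely subsumed by Lemma~\ref{InverseMatrices} (equivalently by Lemma~\ref{BinomSum}), so nothing further is needed. One could also avoid the block picture and check $U'C=I_{a+1}$ entrywise: after substituting $\ell$ for the summation index, the $(q,q')$ entry is a sign times $\sum_{\ell}(-1)^{\ell}\binom{\ell+(q'-q)-1}{c-1}\binom{c}{\ell}$, which equals $1$ when $q=q'$ (only $\ell=c$ contributes) and $0$ otherwise (when $q'>q$, extend the range to $0\le\ell\le c$ — the extra terms vanish — and the sum is a $c$-th finite difference of a polynomial of degree $c-1$; when $q'<q$ the range is empty).
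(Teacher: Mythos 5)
Your proposal is correct and follows essentially the same route as the paper's proof: compute the matrix of $T$ via Lemma~\ref{LTMatrix} and the matrix of $U$ from its defining formula, recognize the blocks $A(a+1,b-a-1)$ and $B(a+1,b-a-1)$, invoke Lemma~\ref{InverseMatrices} to get $U\circ T=I_{D_{m+n-a-1}}$, and then deduce $T\circ U_{|\text{Im }T}=I_{\text{Im }T}$ by applying $T$ (the paper checks this on the basis $T(v_{j,m+n-a-j})$ of $\text{Im }T$, which is the same argument). The index bookkeeping you flag as the only obstacle is carried out correctly, so nothing is missing.
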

\begin{proof}
Since $(T(v_{m-a,n}),T(v_{m-a+1,n-1}),\dots,T(v_{m,n-a}))$ is a basis for $\text{Im }T$, it suffices to show that $U(T(v_{j,m+n-a-j}))=v_{j,m+n-a-j}$ when $m-a \leq j \leq m$.
By Lemma~\ref{LTMatrix}, the $b \times (a+1)$ matrix of $T$ with respect to $\mathcal{B}_{m+n-a-1} $ and $\mathcal{B}_{m+n-b}$ is
\[
M(T)=\begin{pmatrix}
\binom{b-a-1}{b-a-1} & 0  & \dots & 0\\[4pt]
\binom{b-a-1}{b-a-2} & \binom{b-a-1}{b-a-1}  & \dots & 0\\[4pt]
\vdots & \vdots &  \ddots & \vdots\\[4pt]
\binom{b-a-1}{0} & \binom{b-a-1}{1} & \dots & \binom{b-a-1}{a}\\[4pt]
0 & \binom{b-a-1}{0} & \dots & \binom{b-a-1}{a-1}\\[4pt]
\vdots & \vdots  & \ddots & \vdots\\[4pt]
0 & 0  & \dots & \binom{b-a-1}{0}
\end{pmatrix}.
\]
Thus 
\[M(T)=
\begin{pmatrix}
T_1\\
A(a+1,b-a-1)
\end{pmatrix}\]
where $T_1$ has size $(b-a-1) \times (a+1)$ and $A(a+1,b-a-1)$ is defined in Lemma~\ref{InverseMatrices}.

Now

\begin{align*}
U(&v_{m-a+j,n-b+a+1-j})\\
&=\sum_{k=m-a}^m \left(\sum_{i=k}^m (-1)^{i-k}\binom{i-k+b-a-2}{b-a-2} \delta_{i,m-a+j}\right)v_{k,m+n-a-k}\\
&=\sum_{k=m-a}^{m-a+j} \left(\sum_{i=k}^m (-1)^{i-k}\binom{i-k+b-a-2}{b-a-2} \delta_{i,m-a+j}\right)v_{k,m+n-a-k}\\
&=\sum_{k=m-a}^{m-a+j} (-1)^{m-a+j-k}\binom{m-a+j-k+b-a-2}{b-a-2}v_{k,m+n-a-k}\\
&=\sum_{i=0}^j(-1)^{j-i}\binom{b-a-2+j-i}{b-a-2}v_{m-a+i,n-b+a+1-i}.
\end{align*}

Hence the $(a+1) \times b$  matrix of $U$ with respect to $\mathcal{B}_{m+n-b}$ and $\mathcal{B}_{m+n-a-1}$, which has its leftmost $b-a-1$ columns all $0$, is
\[M(U)=
\begin{pmatrix}
0 & \dots & 0 & \binom{b-a-2}{b-a-2} & -\binom{b-a-1}{b-a-2} & \binom{b-a}{b-a-2} & \dots  & (-1)^a \binom{b-2}{b-a-2}\\[4pt]
0 & \dots & 0 & 0 & \binom{b-a-2}{b-a-2} & -\binom{b-a-1}{b-a-2} & \dots & (-1)^{a-1} \binom{b-3}{b-a-2}\\[4pt]
0 & \dots & 0 & 0 & 0 & \binom{b-a-2}{b-a-2}& \dots & (-1)^{a-2}\binom{b-4}{b-a-2}\\[4pt]
\vdots & \ddots & \vdots & \vdots &  \vdots & \vdots & \ddots & \vdots\\[4pt]
0 & \dots & 0 & \vdots &  \vdots & \vdots & \dots & \binom{b-a-2}{b-a-2}
\end{pmatrix}.
\]
The $(i,j)$th entry is $0$ if $j<b-a$ or if $j-i\leq b-a$, and $(-1)^{j-b+a-i+1}\binom{j-i-1}{b-a-2}$ otherwise.  Thus the matrix of $U$ is 
\[\left(0_{a+1,b-a-1} \mid B(a+1,b-a-1)\right),
\]
where $B(a+1,b-a-1)$ is defined in Lemma~\ref{InverseMatrices}.
Then
\begin{align*}
M(U)M(T)&=0_{a+1,b-a-1}T_1+B(a+1,b-a-1)A(a+1,b-a-1)\\
&=0_{a+1}+I_{a+1}=I_{a+1}.
\end{align*}
This proves that $U \circ T=I_{D_{m+n-a-1}}$.

When $m-a \leq j \leq m$,
\begin{align*}
T \circ U_{|\text{Im }T}(T(v_{j,m+n-a-j}))&=T(U\circ T(v_{j,m+n-a-j}))\\
&=T \circ I_{D_{m+n-a-1}}(v_{j,m+n-a-j})\\
&=T(v_{j,m+n-a-j}).
\end{align*}
Since $(T(v_{m-a,n}),T(v_{m-a+1,n-1}),\dots,T(v_{m,n-a}))$ is a basis for $\text{Im }T$, it follows that $T \circ U_{|\text{Im }T}=I_{\text{Im }T}$.

\end{proof}

\section{Proofs}\label{Proofs}
Before we prove Theorem~\ref{MainTheorem}, we need a lemma.  Recall that $a_i,b_i, \lambda_i$ etc. were defined in Theorem~\ref{MainTheorem}.
\begin{lemma}\label{StoS+1}
Suppose that $a_i+1 \leq i <b_i$.  If 
\[y_i=\sum_{j=m-i+1}^{m} \alpha_{j}v_{j,m+n-i+1-j}\in D_{m+n-i}\]
satisfies $(g-1)^{\lambda_i-1}(y_i)=(g-1)^{m+n-a_i-b_i-1}(y_i)=x_{a_i+b_i+1-i}$, then
\[(g-1)^{\lambda_i-1}\left(\sum_{j=m-i}^{m-1} \alpha_{j+1}  v_{j,m+n-i-j}\right)=-x_{a_i+b_i-i}.\]
\end{lemma}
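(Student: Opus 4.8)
The plan is to introduce the linear operator $\sigma$ on $V_m \otimes V_n$ that down-shifts the first tensor index: on the basis $\mathcal{B}$ set $\sigma(v_{i,j}) = v_{i-1,j}$, with the standing convention $v_{0,j} = 0$ (so, concretely, $\sigma$ is the restriction of $(g-1)_{V_m}\otimes g_{V_n}$, using that $(g-1)u_i = u_{i-1}$ and $g\cdot g^{n-i}w_j = g^{n-(i-1)}w_j$). Everything then rests on two facts. First, $\sigma$ commutes with $g-1$: by Lemma~\ref{Prelim}(1), $(g-1)(v_{i,j}) = v_{i-1,j} + v_{i,j-1}$, so $\sigma((g-1)v_{i,j}) = v_{i-2,j} + v_{i-1,j-1} = (g-1)(v_{i-1,j}) = (g-1)(\sigma v_{i,j})$, all out-of-range terms being $0$ on both sides; induction on $r$ gives $\sigma\,(g-1)^r = (g-1)^r\,\sigma$. (Alternatively this is immediate from the tensor description, since $(g-1)_{V_m}$ and $g_{V_m}$ commute, so $(g-1)_{V_m}\otimes g_{V_n}$ commutes with the diagonal action $g_{V_m}\otimes g_{V_n}$.) Second, $\sigma$ sends $x_c$ to $-x_{c-1}$ for each relevant $c$, and $\sigma$ sends $y_i$ to the element displayed in the conclusion.

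Granting these, the lemma drops out. Write $c = a_i + b_i + 1 - i$, so that, since $\lambda_i = m+n-a_i-b_i$, the hypothesis reads $(g-1)^{\lambda_i-1}(y_i) = x_c$. Applying $\sigma$ and commuting it past $(g-1)^{\lambda_i-1}$,
\begin{align*}
(g-1)^{\lambda_i-1}\Bigl(\sum_{j=m-i}^{m-1}\alpha_{j+1} v_{j,m+n-i-j}\Bigr)
&= (g-1)^{\lambda_i-1}\bigl(\sigma(y_i)\bigr) = \sigma\bigl((g-1)^{\lambda_i-1}(y_i)\bigr)\\
&= \sigma(x_c) = -x_{c-1} = -x_{a_i+b_i-i},
\end{align*}
which is exactly the claimed identity.

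It remains to verify the two secondary facts. For $y_i$: since $i < b_i \le m$ we have $i \le m-1$, so the lowest term $v_{m-i+1,n}$ of $y_i$, whose first index $m-i+1 \ge 2$, is not annihilated by $\sigma$; reindexing $k = j-1$ in $\sigma(y_i) = \sum_{j=m-i+1}^m \alpha_j v_{j-1,m+n-i+1-j}$ yields $\sum_{k=m-i}^{m-1}\alpha_{k+1} v_{k,m+n-i-k}$, as needed. For $x_c$: from $a_i+1 \le i \le b_i - 1$ one gets $c \ge a_i+2 \ge 2$ and $1 \le c-1 \le b_i - 1 \le m-1$, so $x_{c-1}$ is a legitimate member of $\{x_1,\dots,x_m\}$; then $\sigma(x_c) = \sum_{j=1}^c (-1)^{j-1} v_{j-1,c+1-j}$, the $j=1$ summand $v_{0,c}$ vanishes, and reindexing $k = j-1$ on the remaining terms gives $\sum_{k=1}^{c-1}(-1)^k v_{k,c-k} = -\sum_{k=1}^{c-1}(-1)^{k-1} v_{k,c-k} = -x_{c-1}$.

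There is no serious obstacle here: the entire content is the observation that the first-index down-shift $\sigma$ commutes with $g-1$. The one point requiring a moment's care is recording that the hypothesis $i < b_i \le m$ forces $i \le m-1$, so that $\sigma(y_i)$ is literally the vector written in the conclusion rather than that vector with its bottom coordinate lost; the rest is two bookkeeping reindexations.
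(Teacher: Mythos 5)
Your proposal is correct and is essentially identical to the paper's proof: your $\sigma$ is the paper's operator $h_1$ (the down-shift on the first tensor index), and the argument there likewise rests on $h_1$ commuting with $g-1=h_1+h_2$ and satisfying $h_1(x_c)=-x_{c-1}$. Your write-up simply verifies the commutation and the index bookkeeping in more explicit detail than the paper does.
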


\begin{proof}
This proof was suggested by the anonymous reviewer and draws on ideas in the proof of Theorem 2.2.2 of I\&I.  Let $h_1$ and $h_2$ be the linear transformations of $V_m \otimes V_n$ into itself determined by $h_1(v_{i,j})=v_{i-1,j}$ and $h_2(v_{i,j})=v_{i,j-1}$ for all $i$ and $j$.
Then $g-1=h_1+h_2$ and $h_1$, $h_2$, and $g-1$ all commute.  Also, when $2 \leq i \leq m$, $h_1(x_i)=-x_{i-1}$.  Thus
\begin{align*}
(g-1)^{\lambda_i-1}\left(\sum_{j=m-i}^{m-1} \alpha_{j+1}  v_{j,m+n-i-j}\right)&=
(g-1)^{\lambda_i-1}(h_1(y_i))\\
&=h_1\left((g-1)^{\lambda_i-1}(y_i)\right)\\
&=h_1(x_{a_i+b_i+1-i})\\
&=-x_{a_i+b_i-i}.
\end{align*}
\end{proof}
\begin{proof}[Proof of Theorem~\ref{MainTheorem}]
It is clear from the proof of Corollary~\ref{CorHou} that once we  identify, for each $i \in [m]$, $y_i \in D_{m+n-i}$ satisfying $(g-1)^{\lambda_i-1}(y_i)=x_{m+n-i-\lambda_i+1}$, then the conclusions $F G y_i \cong V_{\lambda_i}$ and $V_m \otimes V_n =\bigoplus_{i=1}^m F G y_i$ follow.

As in the statement, $i \in [m]$, $a_i+1=\min\{k \mid \lambda_k=\lambda_i\}$, and $b_i=\max\{k \mid \lambda_k=\lambda_i\}$.  Then by~\cite[Proposition 2]{B2015},
\[\lambda_i=\frac{1}{b_i-a_i}\sum_{k=a_i+1}^{b_i} (m+n-2 k+1)=n+(m-b_i)-a_i=m+n-a_i-b_i.
\]
Now by Lemma~\ref{LTMatrix}, the $k \times k$ matrix $A_k$ of $S_k=(g-1)^{m+n-2k} \colon D_{m+n-k} \to D_k$ with respect to $\mathcal{B}_{m+n-k}$ and $\mathcal{B}_k$  has $(i,j)$-entry $\binom{m+n-2k}{m-k+j-i}$.  (Here $1 \leq k \leq m$.)

By the formulations of ~\cite[Theorem 2.2.9]{IandI2009} in~\cite[Propositions 9 and 10]{GPX2}, the integers $k \in [1,m]$ such that $\det A_k \neq 0$ are precisely the $b_i$.  So $S_{b_i}$ is a bijection.

When $a_i+1=b_i$, $i=b_i$, $\lambda_i=m+n-2i+1=m+n-2 b_i+1$, and in this case $S_{b_i}$ maps $D_{m+n-b_i}$ bijectively onto $D_{b_i}$.  Let $y_{b_i}=S_{b_i}^{-1}(x_{b_i})\in D_{m+n-b_i}$.  Then $(g-1)^{\lambda_i-1}(y_{b_i})=x_{b_i}=x_{m+n+1-i-\lambda_i}$. 

Assume $a_i+1<b_i$ and that $i=a_i+1$.  Then $(g-1)^{\lambda_i-1}\colon D_{m+n-a_i-1} \to D_{b_i}$ is the composition  of the injective linear transformation $T=(g-1)^{b-a_i-1}_{|D_{m+n-a_i-1}}$ and the isomorphism $S_{b_i}$.   By Lemma~\ref{Hou}, $x_{b_i} \in \text{Im }S_{b_i} \circ T$ and thus $S_{b_i}^{-1}(x_{b_i}) \in \text{Im }T$. Let $y_{a_i+1}=U(S^{-1}(x_{b_i}))\in D_{m+n-a_i-1}$, where $U$ is defined in Lemma~\ref{TInverse}.  Then
\begin{align*}
(g-1)^{\lambda_i-1}(y_{a_i+1})&=S_{b_i}\circ T(y_{a_i+1})\\
&=S_{b_i} \circ T(U(S_{b_i}^{-1}(x_{b_i})))\\
&=S_{b_i} \circ I_{|\text{Im }T}(S_{b_i}^{-1}(x_{b_i}))\\
&=x_{b_i}\\
&=x_{m+n+1-a_i-1-\lambda_{a_i+1}}.
\end{align*}

Assume that $a_i+1<b_i$ and that $a_i+2 \leq i \leq b_i$.  Then $(g-1)^{\lambda_i-1}$ maps $D_{m+n-i}$ to  $D_{m+n-i-\lambda_i+1}$.
Suppose that
\[y_{a_i+1}=\sum_{j=m-a_i}^m\alpha_{j} v_{j,m+n-a_i-j}\]
is the expression of $y_{a_i+1}$, which we have identified above, in terms of $\mathcal{B}_{m+n-a_i-1}$.  Then
\[(g-1)^{\lambda_{a_i+1}-1}(y_{a_i+1})=(g-1)^{m+n-a_i-b_i-1}(y_{a_i+1})=x_{b_i}=x_{m+n+1-(a_i+1)-\lambda_{a_i+1}}.\]
Recalling our definition of $h_1$ in Lemma~\ref{StoS+1},  let $y_i=(-1)^{i-a_i-1}h_1^{i-a_i-1}(y_{a_i+1})$.  Then by repeated applications of Lemma~\ref{StoS+1}
\begin{align*}
(g-1)^{\lambda_i-1}(y_i)&=(g-1)^{\lambda_i-1}\left((-1)^{i-a_i-1}h_1^{i-a_i-1}(y_{a_i+1})\right)\\
&=(-1)^{i-a_i-1}h_1^{i-a_i-1}\left((g-1)^{\lambda_i-1}(y_{a_i+1})\right)\\
&=(-1)^{i-a_i-1}h_1^{i-a_i-1}\left(x_{m+n+1-(a_i+1)-\lambda_{a_i+1}}\right)\\
&=x_{m+n+1-(a_i+1)-\lambda_{a_i+1}-i+a_i+1}\\
&=x_{m+n+1-i-\lambda_i}.
\end{align*}
\end{proof}

\section{An Example}\label{Example}

When $\text{char }F=7$,
\[V_{12} \otimes V_{13} \cong 4 \cdot V_{21} \oplus V_{16} \oplus V_{14} \oplus V_{12} \oplus 4 \cdot V_7 \oplus V_2.\]

So $\lambda_5=16$ occurs with multiplicity one.  We use Case (1) of Theorem~\ref{MainTheorem} to identify $y_5 \in D_{20}$ such that $(g-1)^{15}(y_5)=x_5$.  Now $S_5=(g-1)^{15} \colon D_{20} \to D_5$ is invertible by~Theorem~\ref{MainTheorem}.   By Lemma~\ref{LTMatrix}, the $5 \times 5$ matrix $A_5$ of $S_5$ with respect to $\mathcal{B}_{20}$ and $\mathcal{B}_5$ has $(i,j)$-entry $\binom{15}{7+j-i}$.

I\@I look for $w \in \text{ker }(g-1)^{16} \colon D_{20} \to D_4$.  It is unique, up to a scalar since $(g-1)^{15} \colon D_{20} \to D_5$ is an isomorphism.

Applying Proposition~\ref{Adjoint}, the matrix of $S_5^{-1}$ with respect to $\mathcal{B}_5$ and $\mathcal{B}_{20}$ is
\[A_5^{-1}=
\begin{pmatrix}
4 & 3 & 4 & 3 & 4\\
0 & 4 & 3 & 4 & 3\\
0 & 0 & 4 & 3 & 4\\
0 & 0 & 0 & 4 & 3\\
0 & 0 & 0 & 0 & 4
\end{pmatrix}
\]
and since
\[A_5^{-1}
\begin{pmatrix}
1 & 6 & 1 & 6 & 1
\end{pmatrix}^T
=
\begin{pmatrix}
6 & 5 & 5 & 6 & 4
\end{pmatrix}^T,
\]
$y_5=S_5^{-1}(x_5)=6 v_{8,13}+5 v_{9,12}+5 v_{10,11}+6 v_{11,10}+4 v_{12,9}$.

Next we focus on $\lambda_8=\lambda_9=\lambda_{10}=\lambda_{11}=7$.  First we use Case (2) of Theorem~\ref{MainTheorem} to identify $y_8 \in D_{17}$ such that $(g-1)^6(y_8)=x_{11}$.  By Theorem~\ref{MainTheorem}, $S_{11}=(g-1)^3 \colon D_{14} \to D_{11}$ is invertible.  By Lemma~\ref{LTMatrix}, the $11 \times 11$  matrix $A_{11}$ of $S_{11}$ with respect to $\mathcal{B}_{14}$ and $\mathcal{B}_{11}$ has $(i,j)$-entry $\binom{3}{1+j-i}$.

Applying  Proposition~\ref{Adjoint}, the matrix of $S_{11}^{-1}$ with respect to $\mathcal{B}_{11}$ and $\mathcal{B}_{14}$ is 
\[\setcounter{MaxMatrixCols}{11}
A_{11}^{-1}=
\begin{pmatrix}
3 & 6 & 1 & 4 & 0 & 1 & 0 & 4 & 1 & 6 & 3\\
1 & 4 & 0 & 1 & 0 & 4 & 1 & 6 & 3 & 0 & 6\\
3 & 5 & 4 & 6 & 0 & 6 & 4 & 5 & 2 & 3 & 1\\
6 & 3 & 1 & 3 & 0 & 4 & 6 & 5 & 5 & 6 & 4\\
0 & 0 & 0 & 0 & 0 & 1 & 4 & 6 & 4 & 1 & 0\\
0 & 0 & 0 & 0 & 0 & 0 & 1 & 4 &  6 & 4 & 1\\
1 & 4 & 6 & 4 & 1 & 0 & 0 & 0 & 0 & 0 & 0\\
4 & 2 & 3 & 2 & 4 & 0 & 0 & 3 & 6 & 1 & 4\\
6 & 3 & 1 & 3 & 6 & 0 & 0 & 1 & 4 & 0 & 1\\
4 & 2 & 3 & 2 & 4 & 0 & 0 & 3 & 5 & 4 & 6\\
1 & 4 & 6 & 4 & 1 & 0 & 0 & 6 & 3 & 1 & 3
\end{pmatrix}.
\]
Since
\begin{align*}
A_{11}^{-1}&
\begin{pmatrix}
1 & 6 & 1 & 6 & 1 & 6 & 1 & 6 & 1 & 6 & 1
\end{pmatrix}^T\\
&=
\begin{pmatrix}
1 & 3 & 3 & 1 & 0 & 0 & 0 & 6 & 4 & 4 & 6
\end{pmatrix}^T,
\end{align*}
$S_{11}^{-1}(x_{11})=v_{2,13}+3 v_{3,12}+3 v_{4,11}+v_{5,10}+6 v_{9,6}+4 v_{10,5}+4 v_{11,4}+6 v_{12,3}$.
We can use either the formula for $U$ in the statement or the matrix of $U$ in the proof of Lemma~\ref{TInverse} to show that $U(S_{11}^{-1}(x_{11}))=v_{5,13}+6 v_{12,6}$.
Thus $y_8=v_{5,13}+6 v_{12,6}$.

Starting with $y_8=v_{5,13}+6 v_{12,6}$ and applying Case (3) of Theorem~\ref{MainTheorem} three times, we set $y_{9}=6v_{4,13}+v_{11,6}$, $y_{10}=v_{3,13}+6v_{10,6}$, and $y_{11}=6v_{2,13}+v_{9,6}$.  It follows that 
$(g-1)^6(y_9)=x_{10}$, $(g-1)^6(y_{10})=x_9$, and $(g-1)^6(y_{11})=x_8$, as required.

\textit{Acknowledgement: }We thank an anonymous reviewer for helping us realize  the connection between our work and that  of I\&I, for many helpful suggestions including the proof of Lemma~\ref{StoS+1},  and lastly for bringing  Nordenstam and Young's paper to our attention.

\end{document}